\documentclass[letterpaper, 10 pt, conference]{ieeeconf}  % Comment this line out
                                                          % if you need a4paper
%\documentclass[a4paper, 10pt, conference]{ieeeconf}      % Use this line for a4
                                                          % paper

\IEEEoverridecommandlockouts                              % This command is only
                                                          % needed if you want to
                                                          % use the \thanks command
\overrideIEEEmargins
% See the \addtolength command later in the file to balance the column lengths
% on the last page of the document

% The following packages can be found on http:\\www.ctan.org
%\usepackage{graphics} % for pdf, bitmapped graphics files
%\usepackage{epsfig} % for postscript graphics files
%\usepackage{mathptmx} % assumes new font selection scheme installed
%\usepackage{times} % assumes new font selection scheme installed
%\usepackage{amsmath} % assumes amsmath package installed
%\usepackage{amssymb}  % assumes amsmath package installed

\usepackage{bbm}
\usepackage{mathrsfs}
\usepackage{verbatim}
\usepackage{amsmath}
\usepackage{amsfonts}
\usepackage{bm}
\usepackage{graphicx}
\usepackage{caption}
\usepackage{float}

\usepackage{amsthm}
\usepackage{xcolor}
\usepackage[colorinlistoftodos]{todonotes}
\usepackage{mathtools}
\usepackage{cite}

\theoremstyle{definition}

\newtheorem{definition}{Definition}
\newtheorem{remark}{Remark}
\newtheorem{problem}{Problem}

\theoremstyle{plain}

\newtheorem{theorem}{Theorem}
\newtheorem{lemma}{Lemma}

%\pagestyle{plain}
%\pagenumbering{arabic}

\title{\LARGE \bf
Decentralized Control of Two Agents with Nested Accessible Information}

\author{Aditya Dave, {\itshape{Student Member, IEEE,}} Nishanth Venkatesh, {\itshape{Student Member, IEEE,}} \\  Andreas A. Malikopoulos, {\itshape{Senior Member, IEEE}} 
	\thanks{This research was supported by the Sociotechnical Systems Center (SSC) at the University of Delaware.} %
	\thanks{The authors are with the Department of Mechanical Engineering, University of Delaware, Newark, DE 19716 USA (email: \texttt{adidave@udel.edu; nish@udel.edu; andreas@udel.edu).}} }

\begin{document}

\maketitle
\thispagestyle{empty}

\begin{abstract}
In this paper, we investigate a decentralized stochastic control problem with two agents, where a part of the memory of the second agent is also available to the first agent at each instance of time. We derive a structural form for optimal control strategies which allows us to restrict their domain to a set which does not grow in size with time. We also present a dynamic programming (DP) decomposition which can utilize our results to derive optimal strategies for arbitrarily long time horizons. Since obtaining optimal control strategies by solving this DP decomposition is computationally intensive, we present potential resolutions in the form of simplified strategies by imposing additional conditions on our model, and an approximation technique which can be used to implement our results with a bounded loss of optimality.
\end{abstract}

\section{Introduction} \label{section:Introduction}

Decentralized stochastic control problems consist of cooperative agents who take actions over a time horizon to minimize a shared cost, with limited ability to communicate in real time. Typical decentralized systems include connected and automated vehicles \cite{malikopoulos2021optimal} and social media platforms \cite{Dave2020SocialMedia}. %, and robot swarms \cite{Beaver2020AnFlockingb}. %, and vehicle platoons  \cite{mahbub2021_platoonMixed}.
Generally, no single agent has both: (1) access to all information in the system and (2) the ability to assign all actions. Thus, such systems are characterized by their \textit{information structure}, which describes the information available to each agent at each time. Various information structures, summarized in \cite{mahajan2012}, are:
\textit{(1) Classical,}  where all agents communicate and recall information perfectly \cite{varaiya_book};
\textit{(2) Quasi-classical,} if agent $1$ can affect the state of agent $2$, and the information available to agent $1$ is \textit{also} available to agent $2$ \cite{lessard2013structural}; and
\textit{(3) Non-classical,} where agents can affect each others' states with incomplete information \cite{17, mahajan2013controlsharing, nayyar2011structure, Dave2019a, Dave2021a, xie2020optimally, dave2020structural, Malikopoulos2021}. %Decentralized control problems can be further classified as (1) \textit{Static:} where the actions of any agent do not affect influence the information available to any other agent and (2) \textit{Dynamic:} where the actions of agents may affect the information available to other agents. Generally, we consider that in dynamic systems there is a causal relationship between agents that determines their order of action a priori.
%Most of the work in decentralized control has focused on minimizing the expected value of a cost \cite{nayyar2015structural, yuksel2009stochastic, 17, dave2020structural, Dave2021a, Malikopoulos2021} for specific information structures. In each of these problems, the system uncertainties are modelled as random variables with known distributions.

Non-classical systems suffer from doubly exponential growth in computations required to generate optimal control strategies with an increase in the planning horizon \cite{bernstein2002complexity}. The common information approach \cite{17} alleviates this problem for systems with partial history sharing among all agents. The main idea in this approach is to identify an \textit{information state} which can be utilized, in place of the common information across all agents, to derive the optimal control strategies. For systems with partial history sharing, the information state and private information of each agent do not grow in size with time. Subsequently, we can use a dynamic programming (DP) decomposition to compute optimal control strategies for long time horizons. However, the computational tractability of this approach suffers if the private information of any agent grows with time. This phenomenon is commonly observed in systems with partially nested information \cite{lessard2013structural}, one-directional communication \cite{mahajan2013controlsharing, nayyar2011structure, Dave2019a, Dave2021a} and unreliable communication \cite{asghari2018optimal, ouyang2016optimal}. For such systems, current methods focus on identifying specific dynamics and information structures which yield computationally tractable solutions \cite{mahajan2013controlsharing, nayyar2011structure}. %For systems with linear dynamics, quadratic costs and Gaussian noises (LQG), variations of the quasi-classsical information structure have been used to derive optimal control actions as functions of the expected value of the state at each time step \cite{lessard2013structural, ouyang2016optimal}. 

In this paper, we identify a general information structure, called \textit{nested accessible information,} for decentralized systems with two agents, and show that even in the presence of noisy observations of the state, it yields control strategies which are functions of information states. At each instance of time, we consider that a subset of the information available to agent $2$, called accessible information, is sequentially nested within the information available to agent $1$. However, the information which is available to agent $1$ and not available to agent $2$ is allowed to grow in size with time. For example, this phenomenon occurs when agent $1$ does not share their observations and actions with agent $2$ but receives the actions and observations of agent $2$ at each time. Other special cases of our information structure include teams of two agents with: (1) either instantaneous or delayed one-directional communication from agent $2$ to agent $1$ \cite{Dave2021a, xie2020optimally}; (2) transmission of data from agent $1$ to agent $2$ using an unreliable communication channel, which was considered for systems with linear dynamics, quadratic costs and Gaussian noises in \cite{ouyang2016optimal}; (3) real time communication from agent $1$ to agent $2$ \cite{nayyar2011structure}. %Other systems with this information structure include large organizations, where management may not share all observations with employees \cite{zabojnik2002centralized}, and car platoons, where cars may not sense cars behind them \cite{mahbub2021_platoonMixed}. %We limit our study to a system with two agents to better illustrate our approach and results. While our approach can be extended to any number of agents with a similar information structure, the structural form of optimal control strategies grows more complex with the number of agents.

%For example, this may be observed systems with one-directional communication \cite{mahajan2013controlsharing, Dave2021a} like 
%large organizations, where management may not share all observations with employees \cite{zabojnik2002centralized}, and car platoons, where cars may sense cars ahead of them \cite{mahbub2021_platoonMixed}. %, or real time communication, where an encoder's observations are not available to a decoder \cite{gagrani2020worst}.
%However, in many cases, the information available to a group of agents may be nested within the information available to another group of agents.

Our main contribution in this paper is that we establish a structural form for optimal control strategies in systems with nested accessible information (Theorem \ref{pbp_struct_result_2}). This structural form allows us to restrict their domain to a space which does not grow in size with time. In our exposition, we use a combination of the \textit{person-by-person} \cite{lessard2013structural} and \textit{prescription} \cite{dave2020structural} approaches. While both approaches are well established, we combine them to yield results for optimal strategies which cannot be derived by an individual application of either of these approaches. %Furthermore, with the \textit{nested accessible information} property, we identify a general information structure which can always be analyzed with a combination of these techniques. %and which includes many existing models as special cases. 
Next, we present a DP decomposition which utilizes our results to obtain optimal control strategies (Section \ref{subsection:DP}). In general, solving this DP is computationally challenging. As a potential resolution, we show how our results can be simplified with an additional assumption of decoupled dynamics for the system (Section \ref{section:decoupled}). Finally, we propose an approximate solution which can be used to improve the computational tractability of the DP even in the presence of coupled dynamics (Section \ref{section:Implementation}). While we restrict our attention to a team of two agents to simplify the exposition, our results can also be applied to systems with multiple agents in two nested subsystems, using a technique presented in Section III of \cite{Dave2021b}.

%Our first contribution is to identify a general class of systems with nested accessible information, which yield to our analysis. 

The remainder of the paper is organized as follows. In Section II, we provide our problem formulation. In Section III, we analyze the problem and derive our main results. In Section IV, we present specialized results for systems with additional assumptions on the dynamics. In Section V, we present an approximation technique to implement our results. Finally, in Section VI, we present concluding remarks and discuss ongoing work.

\section{Problem Formulation}
\label{section:problem}
We consider a team of two agents who take actions over $T \in \mathbb{N}$ discrete time steps. For each $t=0,\dots, T$, the state of the team is denoted by the random variable $X_t$ which takes values in a finite set $\mathcal{X}_t$. %All system variables for the team are defined on a given probability space $(\Omega, \mathscr{F}, \mathbb{P})$, where $\Omega$ is a finite set of events, $\mathscr{F}$ is the associated $\sigma$-algebra, and $\mathbb{P}$ is a probability measure on $(\Omega,\mathscr{F})$. 
%The state of the team at time $t$ is denoted by the random variable $X_t: (\Omega, \mathscr{F}) \to (\mathcal{X}_t, \mathscr{X}_t)$. 
The action of an agent $k = 1,2$ at time $t$ is $U_t^k$, which takes values in a finite set $\mathcal{U}^k_t$. We denote the tuple $(U_t^1,U_t^2)$ by ${U}_t^{1:2}$. Starting at the initial state $X_0$, the system evolves as
\begin{equation}
    X_{t+1}=f_t\left(X_t,U_t^{1:2},W_t\right), \quad t = 0,\dots,T-1, \label{st_eq}
\end{equation}
%using an evolution function $f_t: \mathcal{X}_t \times \mathcal{U}_t^1 \times \mathcal{U}_t^1 \times \mathcal{W}_t \to \mathcal{X}_{t+1}$,
where $W_t$ is an uncontrolled disturbance which takes values in a finite set $\mathcal{W}_t$. At each $t = 0,\dots,T$, each agent $k = 1,2$ makes an observation $Y_t^k := h_t^k(X_t,V_t^k)$,
%: (\Omega, \mathscr{F}) \to (\mathcal{Y}^k_t, \mathscr{Y}^k_t)$ 
which takes values in a finite set %of feasible observations 
$\mathcal{Y}^k_t$. Here, $V_t^k$ %:(\Omega, \mathscr{F}) \to (\mathcal{V}^k_t, \mathscr{V}^k_t)$ 
is a measurement noise which takes values in a finite set $\mathcal{V}^k_t$.
The external disturbances $\{W_t: t=0,\dots,T\}$, measurement noises $\{V_t^1, V_t^2: t=0,\dots,T\}$, and initial state $X_0$ are collectively called the \textit{primitive random variables} of the team and their probability distributions are known a priori.
We assume that each primitive random variable is independent of all other primitive random variables to ensure that the system's evolution is Markovian \cite{varaiya_book}.

\begin{definition}
For all $t=0,\dots,T$, the \textit{memory} of an agent $k = 1,2$ is a set of random variables $M_t^k \subseteq \{Y_{0:t}^{1:2}, U_ {0:t-1}^{1:2}\}$, which takes values in a finite collection of sets $\mathcal{M}_t^k$ and satisfies \textit{perfect recall}, i.e, $M_{t-1}^k \subseteq M_{t}^k$, with $M_{-1}^k := \emptyset$.
\end{definition}

%We denote the memory of each agent $k = 1,2$ at any time $t$ by the set of random variables $M_t^k \subseteq \{Y_{0:t}^{1:2}, U_ {0:t-1}^{1:2}\}$ which takes values in a finite collection of sets $\mathcal{M}_t^k$. We consider that each agent has perfect recall, i.e., $M_t^k \subseteq M_{t+1}^k$ for each $k = 1,2$ for all $t=0,\dots,T-1$. Then, t
We partition the memory $M_t^2$ of agent $2$ into two components, the \textit{accessible information} $A_t^2$ and \textit{private information} $L_t^2$, which are described next:

\textit{1) The accessible information} is a subset of the memory of agent $2$ which is also available to agent $1$. For all $t=0,\dots,T,$ we define the accessible information as a set of random variables $A_t^2 \subseteq M_t^2$ which takes values in a finite collection of sets $\mathcal{A}_t^2$ and satisfies the properties: (1) accessibility to agent 1, i.e., $A_t^2 \subseteq M_t^1$, and (2) perfect recall, i.e., $A_{t-1}^2 \subseteq A_{t}^2$, with $A_{-1}^2 := \emptyset$.

\textit{2) The private information} of agent $2$ is a subset of their memory which is unavailable to agent $1$. For all $t=0,\dots,T$, we define the private information as the set of random variables $L_t^2 := M_t^2 \setminus A_t^2$ which takes values in a finite collection of sets $\mathcal{L}_t^2$. We impose the condition $L_t^2 \cap M_t^1 = \emptyset$ to specify that agent $1$ can not access the private information of agent $2$ at each $t$.
%all information available to both agent $1$ and agent $2$ at time $t$ is collected only in $A_t^2$.

The second property of the accessible information of agent $2$ motivates us to define the \textit{new information} added to $A_t^2$, for all $t=0,\dots,T$, as the set of random variables $Z_{t}^2 := A_{t}^2 \setminus A_{t-1}^2$ which takes values in a finite collection of sets $\mathcal{Z}_t^2$. Note that $Z_0^2 := A_0^2$. Analogously, for all $t=0,\dots,T$, we define the new information added to the memory of agent $1$ as the set of random variables $Z_t^1 := M_t^1 \setminus M_{t-1}^1$ which takes values in a finite collection of sets $\mathcal{Z}_t^1$, where $Z_0^1 := M_0^1$. In our information structure, we enforce that for all $t$, the new information of agent $2$ must satisfy $Z_t^2 \subseteq L_t^2 \cup \{Y_t^{1:2}, U_{t-1}^{1:2}\}$. This ensures that $Z_t^2 \not\subset M_{t-1}^1$ and $Z_t^2 \subseteq Z_t^1$, i.e., $Z_t^2$ is not accessible to agent $1$ prior to time $t$ and becomes accessible to agent $1$ at time $t$.

\begin{remark}
We call the shared set $A_t^2$ the \textit{accessible information} of agent $2$ instead of \textit{common information} \cite{17} to highlight the additional restriction imposed by the property $Z_t^2 \not\subset M_{t-1}^1$. The presence of this restriction allows us to specialize our results to systems where the information available to agent $1$ but unavailable to agent $2$, i.e., $M_t^1 \setminus A_t^2$, may grow in size with time. If we relax this restriction, the accessible information is equivalent to common information. 
\end{remark}

\begin{remark}
As an example of an information structure which satisfies $Z_t^2 \not\subset M_{t-1}^1$, consider one-directional communication from $2$ to $1$ with a delay of $d \in \mathbb{N}$ time steps. In such a system, $M_t^1 = \{Y_{0:t}^{1},  U_{0:t-1}^{1}, Y_{0:t-d}^{2}, U_{0:t-d}^2\}$ and $M_t^2 = \{Y_{0:t}^2, U_{0:t-1}^{2}\}$. Then, $A_t^2 = \{Y_{0:t-d}^{2}, U_{0:t-d}^2\}$, $L_t^2 = \{Y_{t-d+1:t}^2, U_{t-d+1:t-1}^2\}$, and the set $M_t^1 \setminus A_t^2 = \{Y_{0:t}^1, U_{0:t-1}^1\}$ grows in size with time. Recall that we have referenced other information structures which satisfy the conditions for nested accessible information in Section \ref{section:Introduction}.
\end{remark}

%The complete memory of agent $k = 1,2$ at any time $t$ is denoted by $M_t^k := \{L_t^{k}, A_t^k\}$ and takes values in a finite set $\mathcal{M}_t^k$. By convention, we consider that the memories at any time $t$ are updated before the agents select the realizations of their actions. We only consider those systems which have perfect recall, i.e., $M_{t-1}^k \subseteq M_t^k$. This is a very general property in stochastic control problems \cite{varaiya_book}. Note that $L_t^{k,n} \cap A_t^n = \emptyset$ for all $k = 1,2$. The memory of each agent $k = 1,2$ is updated at each time $t$ after receiving the observation $Y_t^k$. The new information $Z_t^k$ added to the accessible information of agent $k$ at time $t$ is given as a function of $\big\{L_{t-1}^{1:2}, Y_t^{1:2}, U_{t-1}^{1:2}\big\}$, i.e.,
%\begin{gather}
%    Z_t^k := \xi_t^k(L_{t-1}^{1:2}, Y_t^{1:2}, U_{t-1}^{1:2}),
%\end{gather}
%which takes values in a finite sets $\mathcal{Z}_t^k$. To satisfy nested information structure, we enforce that
%\begin{gather}
%    Z_t^2 \subset Z_t^1, \quad  t=0,\dots,T. \label{z_relation}
%\end{gather}
%Then, the accessible information at time $t$ is updated as $A_t^k = A_{t-1}^k \cup Z_t^k$. Similarly, the private information of each agent $k$ at each time $t$ is updated as
%\begin{gather*}
%    L_t^{k} := \zeta_t^{k}(L_{t-1}^{k:2}, Y_t^{k:2}, U_{t-1}^{k:2}),
%\end{gather*}
%for an appropriate function $\zeta_t^k(\cdot)$. Then, e

For all $t=0,\dots,T$, each agent $k = 1,2$ uses a control law $g_t^k: \mathcal{M}_t^k \to \mathcal{U}_t^k$ to select their action
\begin{gather}
    U_t^k = g_t^k(M_t^k),
\end{gather}
where $M_t^2 = \{L_t^2, A_t^2\}$.
We define the control strategy of agent $k$ as $\boldsymbol{g}^k := (g_t^k: t=0,\dots,T)$ and the control strategy of the team as $\boldsymbol{g} := (\boldsymbol{g}^1,\boldsymbol{g}^2)$. The set of all feasible control strategies is $\mathcal{G}$.
After each agent $k = 1,2$ selects their action $U_t^k$ at time $t$, the team incurs a cost $c_t(X_t,U_t^{1:2}) \in \mathbb{R}_{\geq0}$. The performance criterion over the finite horizon $T$ is
\begin{equation}
    \mathcal{J}(\boldsymbol{g}) = \mathbb{E}^{\boldsymbol{g}}\left[\sum_{t=0}^T{c_t\big(X_t,U_t^{1:2}\big)}\right], \label{per_cri}
\end{equation}
where %$\beta \in [0,1]$ is a discount factor and 
the expectation is with respect to the joint probability distribution on all random variables. %, and the action $U_t^k$ is given by \eqref{U_def} for each $k = 1,2$, at each time $t = 0,\dots,T$. Then, 
Next, we state the optimization problem for the team.

\begin{problem} \label{problem_1}
The optimization problem for the team is $\inf_{\boldsymbol{g} \in \mathcal{G}} \mathcal{J}(\boldsymbol{g})$, given the distributions of the primitive random variables $\{X_0,W_{0:t},V_{0:t}^{1:2}\}$, and the dynamics $\{c_t,f_t,h_t^{1:2}:t=0,\dots,T\}$.
\end{problem}

Problem \ref{problem_1} is guaranteed to have a solution because all variables take values in finite sets.
Our goal is to derive a structural form for an optimal strategy $\boldsymbol{g}^* \in \mathcal{G}$ in Problem \ref{problem_1} which can be computed using a DP decomposition.

\section{Analysis Using Prescriptions} \label{section:chain}

%In this section, we derive structural forms for the optimal control strategy of both agents. Our structural forms lead to time invariant strategies, which can improve computational tractability for systems with long time horizons.

\subsection{Analysis for Agent 1} \label{pbp_agent_1}

In this subsection, we derive a structural form for an optimal control strategy of agent $1$. We first note that given a strategy $\boldsymbol{g}^2$, agent $1$ cannot generate the action $U_t^2$ for each $t$ because they cannot access the complete memory $M_t^2 = \{L_t^2, A_t^2\}$. However, they can access the component $A_t^2$. This motivates us to consider a two stage process for the generation of the action of agent $2$: (1) agent $1$ generates a prescription for agent $2$ using only $A_t^2$, and (2) agent $2$ computes $U_t^2$ using this prescription and their private information $L_t^2$.

%agent $1$ generates a \textit{prescription} for the action of agent $2$ using the accessible information $A_t^2$, and considers that agent $2$ utilizes this prescription and the private information $L_t^2$ to generate $U_t^2$. Note that a prescription is similar to an estimate by agent $1$ for the action of agent $2$.

\begin{definition}
For all $t=0,\dots,T$, a \textit{prescription} for agent $2$ is a mapping $\Gamma_t^{2}: \mathcal{L}^{2}_t \to \mathcal{U}_t^2$ which takes values in a finite set $\mathcal{F}^{2}_t$.
\end{definition}

At each $t$, the prescription for agent $2$ is generated using a prescription law $\psi_t^{2}: \mathcal{A}_t^2 \to \mathcal{F}_t^{2}$, which yields $\Gamma_t^{2} = \psi_t^{2}(A_t^2)$. We call $\boldsymbol{\psi}^2 := (\psi_t^{2} : t=0,\dots,T)$ the prescription strategy for agent $2$. Given a prescription $\Gamma_t^{2}$, the action of agent $2$ is computed as $U_t^2 = \Gamma_t^{2}\big(L_t^2\big)$.
Next, we use the person-by-person approach to set up a ``new" centralized problem for agent $1$. We proceed by arbitrarily fixing the prescription strategy $\boldsymbol{\psi}^2$ for agent $2$. Since the prescription $\Gamma_t^2$ is generated using only the accessible information $A_t^2 \subseteq M_t^1$, agent $1$ can derive the prescription using the fixed strategy as $\Gamma_t^2 = \psi_t^2(A_t^2)$. Then, we define a new state for agent $1$ as
$S_t^1 := \{X_t, L_t^2, A_t^2\}$ for all $t$, which takes values in a finite collection of sets ${\mathcal{S}}_t^1$. Given a prescription strategy $\boldsymbol{\psi}^2$, we can construct a state evolution function $\bar{f}^{{1}}_t(\cdot)$, such that ${S}^{{1}}_{t+1} = \bar{f}^{{1}}_{t}({S}^{{1}}_t, U_t^{{1}}, W_{t}, V_{t+1}^{1:2})$ and an observation rule $\bar{h}_t^1(\cdot)$ which yields $Z_{t+1}^1 = \bar{h}_t^1({S}^{{1}}_t, U_t^{{1}}, W_{t}, V_{t+1}^{1:2})$ for all $t = 0,\dots,T-1$. The existence of these functions can be verified using the dynamics and information structure of the system to write the LHS in terms of the variables in the RHS. Similarly, we can construct a cost function $\bar{c}^{{1}}_t(\cdot)$ which yields the cost $\bar{c}^{{1}}_t({S}^{{1}}_t,U^{{1}}_t) := c_t(X_t, U_t^1, \psi_t^2(A_t^2)(L_t^2))$ for all $t$. Then, for a given prescription strategy $\boldsymbol{\psi}^{2}$, the new centralized problem for agent $1$ has state ${S}_t^1$, control action $U_t^1$, observation $Z_{t}^1$, and cost $\bar{c}_t^1(S_t^1,U_t^1)$ at time $t$. 
Furthermore, the performance criterion %can be written as a function of the control strategy $\boldsymbol{g}^{1}$ as  
is $\mathcal{J}^1(\boldsymbol{g}^{1}) := \mathbb{E}^{\boldsymbol{g}^1} [\sum_{t=0}^T \bar{c}^1_t({S}_t^1, U_t^{1})]$.

\begin{problem} \label{problem_2}
The problem for agent $1$ is
    $\inf_{\boldsymbol{g}^{1}} \mathcal{J}^1(\boldsymbol{g}^{1})$,
given a prescription strategy $\boldsymbol{\psi}^{2}$, the probability distributions of the primitive random variables $\{X_0,W_{0:t},V_{0:t}^{1:2}\}$, and the dynamics $\{\bar{c}^1_t,\bar{f}^1_t,\bar{h}_t^{1}:t=0,\dots,T\}$.
\end{problem}

%Next, we show that, Problem \ref{problem_2} is indeed a person-by-person form for Problem \ref{problem_1}.

\begin{lemma} \label{lem_psi_g_relation} 
For a given control strategy $\boldsymbol{g}^2$, consider a prescription strategy $\boldsymbol{\psi}^2$ such that
\begin{gather}
    \psi_t^{2}(A_t^2)(\cdot) := g_t^2(\cdot, A_t^2), \quad t = 0,\dots,T. \label{u_presc}
\end{gather}
Then, $\mathcal{J}(\boldsymbol{g}^1, \boldsymbol{g}^2) = \mathcal{J}^1(\boldsymbol{g}^1)$ for the fixed prescription strategy $\boldsymbol{\psi}^2$. Moreover, for any given prescription strategy $\boldsymbol{\psi}^2$, consider a control strategy $\boldsymbol{g}^2$ constructed as
\begin{gather}
     g_t^{2}(\cdot,A_t^{2}) := \psi_t^{2}(A_t^2)(\cdot), \quad t = 0,\dots,T. \label{u_presc_inv}
\end{gather}
Then, $\mathcal{J}^1(\boldsymbol{g}^1)$ after fixing $\boldsymbol{\psi}^2$ is equal to $\mathcal{J}(\boldsymbol{g}^1, \boldsymbol{g}^2)$.
\end{lemma}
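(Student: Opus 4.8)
The plan is to prove both implications by a pathwise coupling argument. Fix a realization of the primitive random variables $\{X_0, W_{0:T}, V_{0:T}^{1:2}\}$ and run, side by side, the original system under $(\boldsymbol{g}^1,\boldsymbol{g}^2)$ and the ``new'' centralized system for agent $1$ under $\boldsymbol{g}^1$ with the fixed prescription strategy $\boldsymbol{\psi}^2$. I would show that these two systems generate the same trajectory of $X_t$, $M_t^1$, $A_t^2$, and $L_t^2$ — equivalently the same $S_t^1 = \{X_t, L_t^2, A_t^2\}$ — for every $t$; the per-stage costs then coincide pathwise and taking expectations yields equality of $\mathcal{J}(\boldsymbol{g}^1,\boldsymbol{g}^2)$ and $\mathcal{J}^1(\boldsymbol{g}^1)$.

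The trajectory identity is proved by induction on $t$. For the base case, $X_0$ is a primitive, and $Y_0^{1:2}$, $M_0^{1:2}$, $A_0^2$, $L_0^2$ are common deterministic functions of $X_0$ and $V_0^{1:2}$. For the inductive step, assume the match at time $t$. Agent $1$ applies the same law $g_t^1$ to the same information (in the new problem, agent $1$'s information $\{Z_{0:t}^1, U_{0:t-1}^1\}$ is equivalent to $M_t^1$), producing the same $U_t^1$; under \eqref{u_presc}, agent $2$'s action in the original system is $U_t^2 = g_t^2(L_t^2, A_t^2) = \psi_t^2(A_t^2)(L_t^2)$, which is exactly the quantity substituted for agent $2$'s action inside $\bar{f}_t^1$ and $\bar{c}_t^1$ in the new system — note this is well defined because $A_t^2 \subseteq M_t^1$ so agent $1$ can evaluate $\psi_t^2$. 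Hence $U_t^{1:2}$ agrees across the two systems, so $X_{t+1} = f_t(X_t, U_t^{1:2}, W_t)$ agrees, the observations $Y_{t+1}^{1:2} = h_{t+1}^{1:2}(X_{t+1}, V_{t+1}^{1:2})$ agree, and therefore the updated memories $M_{t+1}^1$, $M_{t+1}^2$ and their parts $A_{t+1}^2$, $L_{t+1}^2$ agree; in particular the new information $Z_{t+1}^1$ fed to agent $1$ through $\bar{h}_t^1$ is the same as in the original system. This closes the induction.

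Given the trajectory identity, $\bar{c}_t^1(S_t^1, U_t^1) = c_t(X_t, U_t^1, \psi_t^2(A_t^2)(L_t^2)) = c_t(X_t, U_t^{1:2})$ for every $t$ and every realization, so $\sum_{t=0}^T \bar{c}_t^1(S_t^1, U_t^1) = \sum_{t=0}^T c_t(X_t, U_t^{1:2})$ pathwise; taking expectations over the primitives gives $\mathcal{J}^1(\boldsymbol{g}^1) = \mathcal{J}(\boldsymbol{g}^1,\boldsymbol{g}^2)$. For the converse, starting from an arbitrary $\boldsymbol{\psi}^2$ and defining $\boldsymbol{g}^2$ by \eqref{u_presc_inv}, the identity $g_t^2(L_t^2, A_t^2) = \psi_t^2(A_t^2)(L_t^2)$ holds by construction, so the same induction and pathwise argument apply verbatim.

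I do not expect a genuine obstacle; the statement is essentially a bookkeeping equivalence between agent $2$'s control laws and prescription laws. The one point requiring care is to verify that the implicit memory-update and new-information maps realized inside $\bar{f}_t^1$, $\bar{h}_t^1$, $\bar{c}_t^1$ (whose existence is asserted in the construction preceding Problem~\ref{problem_2}) are precisely the ones generated by the original dynamics and information structure, so that the inductive step genuinely closes; this relies on the defining properties of nested accessible information — $A_t^2 \subseteq M_t^1$ and $Z_t^2 \subseteq Z_t^1$ — which guarantee both that $\psi_t^2$ is evaluable by agent $1$ and that the information exposed to agent $1$ is consistent across the two formulations.
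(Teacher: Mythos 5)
Your proposal is correct and follows essentially the same route as the paper: both rest on the observation that under \eqref{u_presc} (resp.\ \eqref{u_presc_inv}) the control law and prescription law produce the identical action $U_t^2 = g_t^2(L_t^2,A_t^2) = \psi_t^2(A_t^2)(L_t^2)$, so the per-stage costs $c_t(X_t,U_t^{1:2})$ and $\bar{c}_t^1(S_t^1,U_t^1)$ coincide and summing over $t$ gives the claim. Your pathwise induction merely makes explicit the coupling of trajectories that the paper's proof leaves implicit when it equates the per-stage expectations, so it is a more detailed rendering of the same argument rather than a different one.
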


\begin{proof}
For the first part, given a control strategy $\boldsymbol{g}$ and prescription strategy $\boldsymbol{\psi}^2$, note that $U_t^2 = g_t^{2}(L_t^2,A_t^2) = \psi_t^2(A_t^2)(L_t^{2})$, i.e., the control law and prescription law result in the same control action $U_t^2$ for a given memory $M_t^2 = \{L_t^2, A_t^2\}$, for all $t = 0,\dots,T$. Thus, after fixing $\boldsymbol{\psi}^2$, we can write the expected cost at each $t$ as $\mathbb{E}^{\boldsymbol{g}}[c_t(X_t, U_t^{1:2})] = \mathbb{E}^{\boldsymbol{g}^1}[{c}_t(X_t, U_t^1, \psi_t^2(A_t^2)(L_t^2))] = \mathbb{E}^{\boldsymbol{g}^1}[\bar{c}_t^1(S_t^1, U_t^1)]$, where the second equality holds using the construction of $\bar{c}_t^1(\cdot)$. The proof is complete by summing the cost over all time steps. For the second part, the proof follows from similar arguments as in the first part.
\end{proof}

\begin{remark} \label{remark_3}
We consider that a control strategy $\boldsymbol{g}^2$ and a prescription strategy $\boldsymbol{\psi}^2$ are always selected to satisfy \eqref{u_presc} and \eqref{u_presc_inv} simultaneously. Thus, fixing $\boldsymbol{\psi}^2$ in Problem \ref{problem_2} also fixes $\boldsymbol{g}^2$, and vice versa. Next, consider a control strategy $(\boldsymbol{g}^{*1}, \boldsymbol{g}^{*2})$ which is an optimal solution to Problem \ref{problem_2}. We construct a prescription strategy for agent $2$ as $\psi_t^{*2}(A_t^2)(\cdot) := g_t^{*2}(\cdot, A_t^2)$, for all $t =0,\dots,T$, and use the first part of Lemma \ref{lem_psi_g_relation} to conclude that $\boldsymbol{g}^{*1}$ must an optimal solution for Problem \ref{problem_2} after fixing $\boldsymbol{\psi}^{*2}$. Thus, every optimal solution to Problem \ref{problem_1} yields a corresponding solution to Problem \ref{problem_2}.
\end{remark}

Problem \ref{problem_2} is a centralized stochastic control problem for agent $1$, with a perfectly observed component $A_t^2$ of the state ${S}_t^1$ and a partially observed component $\{X_t, L_t^2\}$, which must be estimated using the memory $M_t^1$. For such an estimation problem, it is known \cite[page 79]{varaiya_book} that agent $1$ can use the probability distribution 
\begin{gather} \label{pi_1_def}
    \Pi_t^1 := \mathbb{P}^{\boldsymbol{g}}\big(X_t, L_t^2 ~|~ M_t^1, \Gamma_{0:t-1}^2\big), \quad t=0,\dots,T,
\end{gather}
which takes values in the set of feasible distributions $\mathcal{P}^1_t := \Delta(\mathcal{X}_t \times \mathcal{L}_t^2)$, where $\Gamma_{0:t-1}^2$ are known given $\boldsymbol{\psi}^2$ and $M_t^1$. %In \eqref{pi_1_def}, we can add the prescriptions $\Gamma_{0:t-1}^2$ because they are completely determined given $M_t^1$ and strategy $\boldsymbol{g} = (\boldsymbol{g}^1, \boldsymbol{g}^2)$ (which also determines $\boldsymbol{\psi}^2$). 
Next, we show that the information state $\Pi_t^1$ evolves independent of the choice of strategies $\boldsymbol{g}^1$ and $\boldsymbol{\psi}^2$.

\begin{lemma} \label{pi_1_evol}
For all $t = 0,\dots,T-1$, there exists a function $\tilde{f}_t^{{1}}(\cdot)$ independent of control strategy $\boldsymbol{g}^1$ and prescription strategy $\boldsymbol{\psi}^2$, such that
$\Pi_{t+1}^1 = \tilde{f}_{t}^{{1}}(\Pi_t^{{1}},U_t^1, \Gamma_t^2, Z_{t+1}^{{1}})$,
and subsequently, for any Borel subset $P^1 \subseteq \mathcal{P}^1_{t+1}$,
$\mathbb{P}(\Pi_{t+1}^1 \in P^1|M_t^1, U_{0:t}^1, \Gamma_{0:t}^2) = \mathbb{P}(\Pi_{t+1}^1 \in P^1|\Pi_t^1, U_t^1, \Gamma_t^2)$.
\end{lemma}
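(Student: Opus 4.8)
The plan is to obtain $\tilde{f}_t^1$ from a direct Bayes' rule computation, using primitive independence to strip off all strategy dependence. First I would record the \emph{deterministic representation} that already underlies the construction of $\bar{f}_t^1$ and $\bar{h}_t^1$ before Problem~\ref{problem_2}, but with $U_t^2$ kept as a free argument instead of substituted from $\psi_t^2$: using \eqref{st_eq}, the maps $h_{t+1}^k$, and the information structure (in particular $Z_{t+1}^2 \subseteq L_{t+1}^2 \cup \{Y_{t+1}^{1:2}, U_t^{1:2}\}$, $L_{t+1}^2 = M_{t+1}^2 \setminus A_{t+1}^2$, and $L_t^2 \cap M_t^1 = \emptyset$), there are fixed maps expressing $(X_{t+1}, L_{t+1}^2, Z_{t+1}^1)$ as a deterministic function of $(X_t, L_t^2, U_t^1, U_t^2, W_t, V_{t+1}^{1:2})$. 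The point to verify is that $A_t^2$ does not enter: the fresh observations depend only on $X_{t+1}$ and $V_{t+1}^{1:2}$, the fresh actions are $U_t^{1:2}$, and the (data-independent) bookkeeping that assembles $L_{t+1}^2$ and $Z_{t+1}^1$ out of these and of $L_t^2$ never needs the already-shared, possibly growing part of agent~1's memory. Pushing the laws of $W_t$ and $V_{t+1}^{1:2}$ through these maps and substituting $U_t^2 = \Gamma_t^2(L_t^2)$ yields a kernel $\rho_t(x_{t+1}, l_{t+1}^2, z_{t+1}^1 \mid x_t, l_t^2, u_t^1, \gamma_t^2)$ that depends on $\boldsymbol{g}^1, \boldsymbol{\psi}^2$ only through its displayed arguments.

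Next, for a realization consistent with the fixed strategies (so $u_t^1 = g_t^1(m_t^1)$, $\gamma_s^2 = \psi_s^2(a_s^2)$, and $M_{t+1}^1 = M_t^1 \cup Z_{t+1}^1$), I would expand $\Pi_{t+1}^1(x_{t+1}, l_{t+1}^2) = \mathbb{P}^{\boldsymbol{g}}(X_{t+1} = x_{t+1}, L_{t+1}^2 = l_{t+1}^2 \mid m_t^1, z_{t+1}^1, \gamma_{0:t}^2, u_t^1)$ by Bayes' rule and the law of total probability over the unobserved $(X_t, L_t^2)$, obtaining
\[
\Pi_{t+1}^1(x_{t+1}, l_{t+1}^2)
= \frac{\sum_{x_t, l_t^2} \Pi_t^1(x_t, l_t^2)\,\rho_t\big(x_{t+1}, l_{t+1}^2, z_{t+1}^1 \mid x_t, l_t^2, u_t^1, \gamma_t^2\big)}
       {\sum_{x_t, l_t^2}\sum_{x', l'} \Pi_t^1(x_t, l_t^2)\,\rho_t\big(x', l', z_{t+1}^1 \mid x_t, l_t^2, u_t^1, \gamma_t^2\big)} .
\]
Two facts justify this: conditioning on $(\Gamma_{0:t}^2, U_t^1)$ on top of $M_t^1$ is redundant — each is a fixed function of $M_t^1$ under the chosen strategies — so the prior factor is $\mathbb{P}^{\boldsymbol{g}}(X_t = x_t, L_t^2 = l_t^2 \mid M_t^1) = \Pi_t^1(x_t, l_t^2)$; and the one-step factor is exactly $\rho_t$, by the representation above together with primitive independence, since $W_t$ and $V_{t+1}^{1:2}$ are independent of $\{X_0, W_{0:t-1}, V_{0:t}^{1:2}\}$ and hence of the entire time-$t$ history and of $U_t^{1:2}$. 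Defining $\tilde{f}_t^1(\Pi_t^1, U_t^1, \Gamma_t^2, Z_{t+1}^1)$ by the right-hand side (and arbitrarily on the probability-zero event where the denominator vanishes) gives the first claim, and strategy-independence is immediate because $\rho_t$ is. One should also note that $\Pi_t^1$ is $\sigma(M_t^1)$-measurable (as $\Gamma_{0:t-1}^2$ is a function of $M_t^1$), so the substitution of $\Pi_t^1$ is legitimate.

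For the Markov property, since $\Pi_t^1$ and $\Gamma_t^2 = \psi_t^2(A_t^2)$ are both $\sigma(M_t^1)$-measurable, in $\mathbb{P}(\Pi_{t+1}^1 \in P^1 \mid M_t^1, U_{0:t}^1, \Gamma_{0:t}^2) = \mathbb{P}(\tilde{f}_t^1(\Pi_t^1, U_t^1, \Gamma_t^2, Z_{t+1}^1) \in P^1 \mid M_t^1, U_{0:t}^1, \Gamma_{0:t}^2)$ the only residual randomness is in $Z_{t+1}^1$, and by the same factoring $\mathbb{P}(Z_{t+1}^1 = z \mid M_t^1, U_{0:t}^1, \Gamma_{0:t}^2) = \sum_{x_t, l_t^2} \Pi_t^1(x_t, l_t^2) \sum_{x', l'} \rho_t(x', l', z \mid x_t, l_t^2, U_t^1, \Gamma_t^2)$ is a function of $(\Pi_t^1, U_t^1, \Gamma_t^2)$ alone. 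Hence the conditional probability is $\sigma(\Pi_t^1, U_t^1, \Gamma_t^2)$-measurable, and since $\sigma(\Pi_t^1, U_t^1, \Gamma_t^2) \subseteq \sigma(M_t^1, U_{0:t}^1, \Gamma_{0:t}^2)$, the tower property yields $\mathbb{P}(\Pi_{t+1}^1 \in P^1 \mid M_t^1, U_{0:t}^1, \Gamma_{0:t}^2) = \mathbb{P}(\Pi_{t+1}^1 \in P^1 \mid \Pi_t^1, U_t^1, \Gamma_t^2)$. I expect the only real obstacle to be the first step — pinning down the deterministic representation with $A_t^2$ genuinely absent and $U_t^2$ appearing only as $\Gamma_t^2(L_t^2)$; this is precisely where the hypotheses $Z_t^2 \subseteq L_t^2 \cup \{Y_t^{1:2}, U_{t-1}^{1:2}\}$ and $L_t^2 \cap M_t^1 = \emptyset$ are used, and once it is settled the remainder is a routine computation.
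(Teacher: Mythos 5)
Your proposal is correct and takes essentially the same route as the paper: the paper proves this lemma by appealing to the Bayes'-rule argument written out for Lemma \ref{pi_2_evol}, and your strategy-independent one-step kernel $\rho_t$ built from the deterministic dynamics and primitive independence, the identification of the prior factor with $\Pi_t^1$, and the subsequent marginalization over $Z_{t+1}^1$ are exactly that argument specialized to agent $1$. The measurability and tower-property finish for the conditional-independence claim likewise matches the paper's computation.
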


\begin{proof}
The proof follows the same arguments as the ones of Lemma \ref{pi_2_evol} in Section \ref{subsection:analysis_2}.
\end{proof}

\begin{lemma} \label{lem_pi_1_cost}
For any given prescription strategy $\boldsymbol{\psi}^2$ of agent $2$, there exists a function $\tilde{c}^1_t(\cdot)$ for all $t=0,\dots,T$, such that
\begin{align}
    \mathbb{E}^{\boldsymbol{g}}[c_t(X_t,U_t^{1:2}) ~ | ~ M_t^1,U_t^1,\Gamma_t^{2}] &= \tilde{c}^1_t(\Pi_t^1, A_t^2, U_t^1). \label{eq_lem_pi_cost}
\end{align}
\end{lemma}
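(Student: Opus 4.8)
The plan is to express the conditional expectation on the left-hand side of \eqref{eq_lem_pi_cost} by first conditioning on the full underlying state $S_t^1 = \{X_t, L_t^2, A_t^2\}$, and then marginalizing using the conditional distribution $\Pi_t^1$. The key observation is that once we fix the prescription strategy $\boldsymbol{\psi}^2$, the prescription $\Gamma_t^2$ is a deterministic function of $A_t^2$, and $A_t^2$ is itself a component of $M_t^1$ (hence measurable with respect to the conditioning $\sigma$-algebra). Moreover, the action of agent $2$ is $U_t^2 = \Gamma_t^2(L_t^2)$, so conditioned on $\{M_t^1, U_t^1, \Gamma_t^2\}$, the cost $c_t(X_t, U_t^{1:2}) = c_t(X_t, U_t^1, \Gamma_t^2(L_t^2))$ depends on the random variables only through $(X_t, L_t^2)$ (with $U_t^1$ and $\Gamma_t^2$ already fixed by the conditioning).

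Concretely, I would write
\begin{align}
    \mathbb{E}^{\boldsymbol{g}}\big[c_t(X_t,U_t^{1:2}) \mid M_t^1,U_t^1,\Gamma_t^{2}\big]
    &= \mathbb{E}^{\boldsymbol{g}}\big[c_t(X_t,U_t^1,\Gamma_t^2(L_t^2)) \mid M_t^1,U_t^1,\Gamma_t^{2}\big] \notag \\
    &= \sum_{x_t \in \mathcal{X}_t} \sum_{\ell_t \in \mathcal{L}_t^2} c_t\big(x_t, U_t^1, \Gamma_t^2(\ell_t)\big)\, \mathbb{P}^{\boldsymbol{g}}\big(X_t = x_t, L_t^2 = \ell_t \mid M_t^1, U_t^1, \Gamma_t^{2}\big). \notag
\end{align}
The next step is to argue that the conditional probability appearing in the sum equals $\Pi_t^1(x_t, \ell_t)$, i.e., that further conditioning on $U_t^1$ does not change the distribution $\mathbb{P}^{\boldsymbol{g}}(X_t, L_t^2 \mid M_t^1, \Gamma_{0:t-1}^2)$ from \eqref{pi_1_def}. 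This follows because $U_t^1 = g_t^1(M_t^1)$ is a deterministic function of the conditioning variable $M_t^1$, and $\Gamma_t^2 = \psi_t^2(A_t^2)$ is a deterministic function of $A_t^2 \subseteq M_t^1$; conditioning on a deterministic function of variables already in the conditioning $\sigma$-algebra adds no information. Hence the probability in the sum is exactly $\Pi_t^1(x_t, \ell_t)$. Since $A_t^2$ is a component of $M_t^1$ and $\Gamma_t^2 = \psi_t^2(A_t^2)$, the entire expression is a function of $(\Pi_t^1, A_t^2, U_t^1)$ alone, which we name $\tilde{c}_t^1(\Pi_t^1, A_t^2, U_t^1)$. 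Note that $\Gamma_t^2$ itself need not be listed as a separate argument because it is determined by $A_t^2$ through the fixed $\boldsymbol{\psi}^2$; equivalently one could absorb $A_t^2$ and write $\tilde{c}_t^1$ in terms of $(\Pi_t^1, \Gamma_t^2, U_t^1)$.

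The main obstacle is the measure-theoretic bookkeeping in the second step: justifying that $\mathbb{P}^{\boldsymbol{g}}(X_t, L_t^2 \mid M_t^1, U_t^1, \Gamma_t^2) = \mathbb{P}^{\boldsymbol{g}}(X_t, L_t^2 \mid M_t^1, \Gamma_{0:t-1}^2)$. One must check two things: (i) that $U_t^1$ can be dropped, which is the deterministic-function argument above; and (ii) that conditioning on the single prescription $\Gamma_t^2$ is the same as conditioning on the whole history $\Gamma_{0:t-1}^2$ used in the definition of $\Pi_t^1$ — here one uses that, given $\boldsymbol{\psi}^2$ and $M_t^1$, the past prescriptions $\Gamma_{0:t-1}^2$ are themselves deterministic functions of $A_{0:t-1}^2 \subseteq M_t^1$, so both conditionings coincide with conditioning on $M_t^1$ together with the fixed prescription strategy. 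Once this equivalence is in hand, the remainder is a routine finite sum, and the well-definedness of $\tilde{c}_t^1$ as a measurable function on $\mathcal{P}_t^1 \times \mathcal{A}_t^2 \times \mathcal{U}_t^1$ is immediate because it is built from finitely many evaluations of the (bounded, hence measurable) cost $c_t$ against the probability weights $\Pi_t^1$.
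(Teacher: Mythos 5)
Your proposal is correct and follows essentially the same route as the paper, which proves this lemma by the argument of Lemma~\ref{lem_pi_cost}: expand the conditional expectation as a finite sum of $c_t\big(x_t,U_t^1,\Gamma_t^2(\ell_t^2)\big)$ weighted by the conditional distribution of $(X_t,L_t^2)$, identify that distribution with $\Pi_t^1$ after noting that $U_t^1$, $\Gamma_t^2$, and $\Gamma_{0:t-1}^2$ are determined by $M_t^1$ and the fixed $\boldsymbol{\psi}^2$, and read off $\tilde{c}_t^1(\Pi_t^1,A_t^2,U_t^1)$. Your additional bookkeeping about dropping $U_t^1$ and the past prescriptions from the conditioning is exactly the justification the paper invokes implicitly.
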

\begin{proof}
The proof follows the same arguments as the ones of Lemma \ref{lem_pi_cost} in Section \ref{subsection:analysis_2}.
\end{proof}

The distribution $\Pi_t^1$ is called an \textit{information state} of agent $1$ at time $t$. As a consequence of Lemmas \ref{pi_1_evol} and \ref{lem_pi_1_cost}, the information state yields the following result for Problem \ref{problem_2}.

\begin{theorem} \label{pbp_struct_result}
    For any given prescription strategy $\boldsymbol{\psi}^{2}$ of agent $2$ in Problem \ref{problem_2}, without loss of optimality, we can restrict attention  to control strategies $\boldsymbol{g}^{*1}$ with the structural form
    \begin{gather} \label{eq_pbp_struct_result}
        U_t^{1} = g_t^{*1}(A_t^2, \Pi_t^1), \quad t = 0,\dots,T.
    \end{gather}
\end{theorem}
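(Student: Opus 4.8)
The plan is to treat Problem~\ref{problem_2}, for the fixed prescription strategy $\boldsymbol{\psi}^{2}$, as a Markov decision process with ``state'' $(\Pi_t^1,A_t^2)$, control $U_t^1$, and per-stage cost $\tilde c_t^1(\Pi_t^1,A_t^2,U_t^1)$, and then to run the standard dynamic-programming (verification) argument backward in time. Throughout, observe that since $\boldsymbol{\psi}^{2}$ is fixed and $A_\tau^2\subseteq A_t^2\subseteq M_t^1$ for $\tau\le t$, every prescription $\Gamma_\tau^2=\psi_\tau^2(A_\tau^2)$ is a deterministic function of $M_t^1$; in particular $\Gamma_t^2$ is a function of $A_t^2$ alone, and $\Pi_t^1$ (which by \eqref{pi_1_def} is conditioned on $M_t^1$ and $\Gamma_{0:t-1}^2$) is itself a deterministic function of $M_t^1$.

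The first substantive step is to check that $(\Pi_t^1,A_t^2)$ is a controlled Markov chain with control $U_t^1$. For $\Pi_{t+1}^1$ this is exactly Lemma~\ref{pi_1_evol}, which gives $\Pi_{t+1}^1=\tilde f_t^1(\Pi_t^1,U_t^1,\Gamma_t^2,Z_{t+1}^1)$ together with the claimed transition law. For $A_{t+1}^2$, note that $A_{t+1}^2=A_t^2\cup Z_{t+1}^2$ with $Z_{t+1}^2$ a fixed sub-collection of the coordinates of $Z_{t+1}^1$, so $A_{t+1}^2=\rho_{t+1}(A_t^2,Z_{t+1}^1)$ for a deterministic $\rho_{t+1}$. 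Combining these with the fact (established in the proof of Lemma~\ref{pi_1_evol}) that the conditional law of $Z_{t+1}^1$ given $(M_t^1,U_{0:t}^1,\Gamma_{0:t}^2)$ depends on the conditioning variables only through $(\Pi_t^1,U_t^1,\Gamma_t^2)$, and using $\Gamma_t^2=\psi_t^2(A_t^2)$, the joint conditional law of $(\Pi_{t+1}^1,A_{t+1}^2)$ depends only on $(\Pi_t^1,A_t^2,U_t^1)$. By Lemma~\ref{lem_pi_1_cost}, the conditional expected cost $\mathbb{E}^{\boldsymbol g}[c_t(X_t,U_t^{1:2})\mid M_t^1,U_t^1,\Gamma_t^2]=\tilde c_t^1(\Pi_t^1,A_t^2,U_t^1)$ likewise depends only on $(\Pi_t^1,A_t^2,U_t^1)$.

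I would then set $V_{T+1}\equiv 0$ and, for $t=T,\dots,0$, define
\begin{equation}
V_t(\pi,a):=\min_{u\in\mathcal U_t^1}\Big(\tilde c_t^1(\pi,a,u)+\mathbb{E}\big[V_{t+1}(\Pi_{t+1}^1,A_{t+1}^2)\,\big|\,\Pi_t^1=\pi,\,A_t^2=a,\,U_t^1=u\big]\Big),
\end{equation}
with $g_t^{*1}(a,\pi)$ a minimizer, which exists because $\mathcal U_t^1$ is finite. A backward induction on $t$, using the tower property and the fact that $(\Pi_t^1,A_t^2)$ and the realized $\Gamma_{0:t-1}^2$ are functions of $M_t^1$, shows that for every admissible $\boldsymbol g^1$,
\begin{equation}
\mathbb{E}^{\boldsymbol g^1}\!\Big[\,\sum_{\tau=t}^{T}\bar c_\tau^1(S_\tau^1,U_\tau^1)\,\Big|\,M_t^1\Big]\ \ge\ V_t(\Pi_t^1,A_t^2),
\end{equation}
with equality whenever $U_\tau^1=g_\tau^{*1}(A_\tau^2,\Pi_\tau^1)$ for all $\tau\ge t$. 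Taking $t=0$ and expectations yields $\mathcal J^1(\boldsymbol g^1)\ge\mathbb{E}[V_0(\Pi_0^1,A_0^2)]$ for all $\boldsymbol g^1$, attained by $\boldsymbol g^{*1}=(g_t^{*1})_{t=0}^T$; hence restricting to strategies of the form~\eqref{eq_pbp_struct_result} is without loss of optimality.

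I expect the crux to be the second step --- justifying that conditioning the optimal cost-to-go on the entire memory $M_t^1$ (together with the realized prescriptions) collapses to a function of $(\Pi_t^1,A_t^2)$, i.e.\ that this pair is a sufficient statistic for control. This rests on the controlled-Markov property of $(\Pi_t^1,A_t^2)$, which is the real content of Lemma~\ref{pi_1_evol}, plus a routine measurability check that $\pi\mapsto\mathbb{E}[V_{t+1}(\Pi_{t+1}^1,A_{t+1}^2)\mid\Pi_t^1=\pi,A_t^2=a,U_t^1=u]$ is Borel on $\mathcal P_t^1$, so that $V_t$ and the selector $g_t^{*1}$ are well defined; both are standard under the finiteness assumptions. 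The remaining steps are bookkeeping with conditional expectations.
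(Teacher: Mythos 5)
Your proposal is correct and is precisely the standard centralized dynamic-programming (verification) argument that the paper's proof invokes by reference to \cite[page 79]{varaiya_book}: you use Lemma \ref{pi_1_evol} and Lemma \ref{lem_pi_1_cost} to show $(\Pi_t^1, A_t^2)$ is a controlled Markov state with sufficient cost, then verify optimality of the DP-derived strategy by backward induction. No discrepancy with the paper's intended approach.
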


\begin{proof}
This proof follows standard arguments for centralized stochastic control problems in \cite[page 79]{varaiya_book}, and thus, it is omitted.
\end{proof}

Theorem \ref{pbp_struct_result} establishes a structural form for an optimal control strategy $\boldsymbol{g}^{*1}$ in Problem \ref{problem_2}, which holds for all $\boldsymbol{\psi}^2$, and subsequently, for all $\boldsymbol{g}^2$. From Remark \ref{remark_3}, we note that any optimal control strategy $(\boldsymbol{g}^{*1}, \boldsymbol{g}^{*2})$ for Problem \ref{problem_1} must yield a corresponding prescription strategy $\boldsymbol{\psi}^{*2}$ such that after fixing $\boldsymbol{\psi}^{*2}$, the control strategy $\boldsymbol{g}^{*1}$ is the optimal solution to Problem \ref{problem_2}. Thus, there exists an optimal control strategy $(\boldsymbol{g}^{*1}, \boldsymbol{g}^{*2})$ for Problem \ref{problem_1} where $\boldsymbol{g}^{*1}$ takes the structural form in \eqref{eq_pbp_struct_result}.

%including $\boldsymbol{\psi}^{*2}$ such that the corresponding $(\boldsymbol{g}^{*1}, \boldsymbol{g}^{*2})$ is optimal for Problem \ref{problem_1}. 

\begin{remark}
Consider that $|\mathcal{X}_t \times \mathcal{L}_t^2| = m \in \mathbb{N}$. Then, the information state $\Pi_t^1$ takes values in the continuous space $\mathcal{P}_t^1 = \big\{\big(p_t(1), \dots, p_t(m)\big) \in [0,1]^m: \sum_{i=1}^m p_t(i) = 1\big\}$. However, for all $t = 0,\dots,T$, the information state can only take \textit{countably} many realizations because all random variables take values in finite sets. For example, at $t=0$, for each $x_0 \in \mathcal{X}_0$ and $l_0^2 \in \mathcal{L}_0^2$, the probability $\mathbb{P}^{\boldsymbol{g}}(x_0, l_0^2~|~z_0^1)$ can take only finitely many values, i.e., one value for each $z_0^1 \in \mathcal{Z}_0^1$. Similarly, at any finite $t$, the memory $M_t^1$ can take finitely many realizations and thus, there are finitely many realizations for $\Pi_t^1$. As the horizon $T \to \infty$, the information state may take at most countably infinite realizations.
\end{remark}

%optimal control strategy $(\boldsymbol{g}^{*1}, \boldsymbol{g}^{*2})$ that is a solution to Problem \ref{problem_1}, must necessarily be a person-by-person optimal solution of Problem \ref{problem_2} \cite{2}. This can be verified by fixing $\boldsymbol{g}^{*2}$ for agent $2$ and verifying that the prescription strategy $\boldsymbol{g}^{*1}$ is optimal for agent $1$ in Problem \ref{problem_2}. Theorem \ref{pbp_struct_result} establishes that there is an optimal solution to Problem \ref{problem_2} that satisfies \eqref{eq_pbp_struct_result}. Thus, we can derive a globally optimal strategy profile that also satisfies \eqref{eq_pbp_struct_result} for all control strategies $\boldsymbol{g}^2$. In subsequent analysis, we will restrict attention to control strategies for agent $1$ that satisfy Theorem \ref{pbp_struct_result}.

\subsection{Analysis for Agent 2} \label{subsection:analysis_2}

In this subsection, we restrict agent $1$ to control strategies $\boldsymbol{g}^1$ which satisfy \eqref{eq_pbp_struct_result}, and derive a structural form for the optimal prescription strategy of agent $2$. Given $\boldsymbol{g}^1$, agent $2$ cannot generate the action $U_t^1$ at each $t$ because they cannot access $\Pi_t^1$. Thus, we consider a two stage process to generate the action of agent $1$: (1)  agent $2$ generates a prescription for agent $1$ using only $A_t^2$, and (2) agent $1$ computes $U_t^1$ using this prescription along with $\Pi_t^1$.

\begin{definition}
For all $t=0,\dots,T$, a \textit{prescription} for agent $1$ is a function $\Gamma_t^{1}: \mathcal{P}^{1}_t \to \mathcal{U}_t^1$ which takes values in a finite set $\mathcal{F}^{1}_t$.
\end{definition}

At each $t$, the prescription for agent $1$ is generated using a prescription law $\psi_t^{1}: \mathcal{A}_t^2 \to \mathcal{F}_t^{1}$, which yields $\Gamma_t^{1} = \psi_t^{1}(A_t^2)$. We call $\boldsymbol{\psi}^1 := \big(\psi_t^{1} : t=0,\dots,T\big)$ the prescription strategy of agent $1$ and $\boldsymbol{\psi} := (\boldsymbol{\psi}^1, \boldsymbol{\psi}^2)$ the prescription strategy of the system. For a given prescription $\Gamma_t^1$, agent $1$ computes their action as $U_t^1=\Gamma_t^1(\Pi_t^1)$. Next, we set up a new centralized problem from the perspective of agent $2$ with a state $S_t^2 := \{X_t, L_t^2, \Pi_t^{1}\}$ for all $t$, which takes values in the finite collection of sets ${\mathcal{S}}_t^2$. %Our goal is to set up an equivalent centralized control problem for agent $2$ with the state $S_t^2$ and control action $(\Gamma_t^1,\Gamma_t^2)$ at each time $t$. Next, 
Moreover, we can construct a state evolution function $\bar{f}^2_t(\cdot)$ such that $S^2_{t+1} = \bar{f}^2_t(S^2_t, \Gamma_t^{1:2}, W_{t}, V_{t+1}^{1:2})$ and an observation rule $\bar{h}^2_t(\cdot)$ which yields $Z^2_{t+1} = \bar{h}^2_t(S^2_t, \Gamma_t^{1:2}, W_{t}, V_{t+1}^{1:2})$ for all $t=0,\dots,T-1$. Similarly, we can construct a cost function $\bar{c}^2_t(\cdot)$ such that $\bar{c}^2_t(S^2_t,\Gamma_t^{1:2}) := c_t(X_t, \Gamma_t^1(\Pi_t^1), \Gamma_t^2(L_t^2))$ for all $t$. Thus, the new centralized problem for agent $2$ has the state $S_t^2$, observation $Z_t^2$ and action $(\Gamma_t^1, \Gamma_t^2)$ at each $t$. The corresponding performance criterion is $\mathcal{J}^2(\boldsymbol{\psi}) = \mathbb{E}^{\boldsymbol{\psi}} [\sum_{t=0}^T\bar{c}_t^2(S_t^2, \Gamma_t^{1:2})]$. %This yields the following optimization problem.

\begin{problem} \label{problem_3}
The optimization problem for agent $2$ is
    $\inf_{\boldsymbol{\psi}} \mathcal{J}^2(\boldsymbol{\psi})$,
given the probability distributions of the primitive random variables $\{X_0,W_{0:t},V_{0:t}^{1:2}\}$, and the dynamics $\{\bar{c}^2_t,\bar{f}^2_t,\bar{h}^2_t:t=0,\dots,T\}$.
\end{problem}

\begin{remark}
Using the same sequence of arguments as Lemma \ref{lem_psi_g_relation}, for each control strategy $\boldsymbol{g}$, we can construct an equivalent prescription strategy $\boldsymbol{\psi}$ such that $\mathcal{J}(\boldsymbol{g}) = \mathcal{J}(\boldsymbol{\psi})$ and vice versa. Thus, we always ensure that $\boldsymbol{\psi}$ is consistent with $\boldsymbol{g}$, which implies that for all $t$, $\Pi_t^1 = \mathbb{P}^{\boldsymbol{g}} (X_t ~|~ M_t^1, \Gamma_{0:t-1}^2) = \mathbb{P}^{\boldsymbol{\psi}} (X_t ~|~ M_t^1, \Gamma_{0:t-1}^2)
    = \mathbb{P}^{\boldsymbol{\psi}} (X_t ~|~ M_t^1, \Gamma_{0:t-1}^1, \Gamma_{0:t-1}^2)$,
where we can add $\Gamma_{0:t-1}^{1}$ to the conditioning because they are functions of $A_t^2 \subseteq M_t^1$ and $\boldsymbol{\psi}^1$. 
Because of this property, we can equivalently write the dependence of a probability distribution on either $\boldsymbol{g}$ or $\boldsymbol{\psi}$.
\end{remark}

Problem \ref{problem_3} is a partially observed centralized stochastic control problem and thus, agent $2$ must estimate the state $S_t^2$ at each time $t$. For this purpose, agent $2$ can use the distribution
\begin{gather} \label{pi_2_def}
    \Pi_t^2 := \mathbb{P}^{\boldsymbol{\psi}}(X_t, L_t^2, \Pi_t^1 ~|~A_t^2, \Gamma_{0:t-1}^{1:2}), \quad t =0,\dots,T,
\end{gather}
which takes values in the set of feasible distributions $\mathcal{P}_t^2 := \Delta(\mathcal{X}_t \times \mathcal{L}_t^2 \times \mathcal{P}_t^1)$. Recall that at each $t$, the information state of agent $1$, $\Pi_t^1$, can take at most countably infinitely many realizations in the space $\mathcal{P}^1_t$. Thus, the information state $\Pi_t^2$ can be represented using a tuple of probability mass functions $\big(p_t(x_t, \ell_t^2, \cdot~|a_t^2, \gamma_{0:t-1}^{1:2}) : x_t \in \mathcal{X}_t, \ell_t^2 \in \mathcal{L}_t^2\big)$, where $p_t(x_t, \ell_t^2, \cdot~|a_t^2, \gamma_{0:t-1}^{1:2}): \mathcal{P}_t^1 \to [0,1]$ for each $x_t \in \mathcal{X}_t$ and $\ell_t^2 \in \mathcal{L}_t^2$.
Next, we show that the evolution of $\Pi_t^2$ is Markovian and independent of the prescription strategy $\boldsymbol{\psi}$.

%\todo[inline]{Is this notation appropriate? I know it is appropriate to use $\Delta$ to denote the space of distributions when the set is finite, but what happens when it is continuous?}

\begin{lemma} \label{pi_2_evol}
For all $t = 0,\dots,T-1$, there exists a function $\tilde{f}_t^{{2}}(\cdot)$ independent of the prescription strategy $\boldsymbol{\psi}$, such that
$\Pi_{t+1}^2 = \tilde{f}_t^2(\Pi_t^2,\Gamma_t^{1}, \Gamma_t^2,Z_{t+1}^2),$
and subsequently, for any Borel subset $P^2 \subseteq \mathcal{P}^2_{t+1}$,
$\mathbb{P}(\Pi_{t+1}^2 \in P^2~|~A_t^2, \Gamma_{0:t}^{1:2}) = \mathbb{P}(\Pi_{t+1}^2 \in P^2~|~\Pi_{t}^2, \Gamma_{t}^{1:2}).$
\end{lemma}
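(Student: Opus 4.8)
The plan is to obtain $\tilde{f}_t^2$ as an explicit Bayesian filter update, exactly as in the standard information-state constructions of the common information approach. Fix a realization $s_{t+1}^2$ of $S_{t+1}^2=(X_{t+1},L_{t+1}^2,\Pi_{t+1}^1)$ and $z_{t+1}^2$ of $Z_{t+1}^2$. Since $A_{t+1}^2 = A_t^2 \cup Z_{t+1}^2$ and, given $\boldsymbol{\psi}$, $\Gamma_t^{1:2}=\psi_t^{1:2}(A_t^2)$ is a deterministic function of $A_t^2$, conditioning on $(A_{t+1}^2,\Gamma_{0:t}^{1:2})$ is the same as conditioning on $(A_t^2,\Gamma_{0:t}^{1:2},Z_{t+1}^2)$. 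Because $X_t,L_t^2$ take finitely many values and $\Pi_t^1$ has at most countable support, $\Pi_t^2$ is an honest probability mass function on a countable set, so all the manipulations below are elementary. By Bayes' rule, $\Pi_{t+1}^2(s_{t+1}^2)$ equals $\mathbb{P}^{\boldsymbol{\psi}}(S_{t+1}^2=s_{t+1}^2,Z_{t+1}^2=z_{t+1}^2 \mid A_t^2,\Gamma_{0:t}^{1:2})$ divided by the normalizing constant $\mathbb{P}^{\boldsymbol{\psi}}(Z_{t+1}^2=z_{t+1}^2 \mid A_t^2,\Gamma_{0:t}^{1:2})$.

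Next I would marginalize over the realization $s_t^2$ of $S_t^2$, writing the numerator as $\sum_{s_t^2} K_t(s_{t+1}^2,z_{t+1}^2 \mid s_t^2,\Gamma_t^{1:2})\,\Pi_t^2(s_t^2)$, where $\mathbb{P}^{\boldsymbol{\psi}}(S_t^2=s_t^2 \mid A_t^2,\Gamma_{0:t}^{1:2}) = \Pi_t^2(s_t^2)$ since $\Gamma_t^{1:2}$ is redundant in the conditioning, and $K_t(\cdot \mid s_t^2,\Gamma_t^{1:2}) := \mathbb{P}^{\boldsymbol{\psi}}((S_{t+1}^2,Z_{t+1}^2)=\cdot \mid S_t^2=s_t^2,A_t^2,\Gamma_{0:t}^{1:2})$. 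The crucial step is to show that $K_t$ is independent of $\boldsymbol{\psi}$ and depends on the conditioning only through $(s_t^2,\Gamma_t^{1:2})$: from the previously constructed dynamics $S_{t+1}^2 = \bar{f}_t^2(S_t^2,\Gamma_t^{1:2},W_t,V_{t+1}^{1:2})$ and $Z_{t+1}^2 = \bar{h}_t^2(S_t^2,\Gamma_t^{1:2},W_t,V_{t+1}^{1:2})$, the kernel $K_t(\cdot \mid s_t^2,\Gamma_t^{1:2})$ is the pushforward of the joint law of $(W_t,V_{t+1}^{1:2})$ through the fixed map $(\bar{f}_t^2,\bar{h}_t^2)(s_t^2,\Gamma_t^{1:2},\cdot,\cdot)$; and since the primitive variables $\{W_t,V_{t+1}^{1:2}\}$ are independent of all other primitives while $(S_t^2,A_t^2,\Gamma_{0:t}^{1:2})$ is a deterministic function of $\boldsymbol{\psi}$ and of the primitives $\{X_0,W_{0:t-1},V_{0:t}^{1:2}\}$, the conditional law of $(W_t,V_{t+1}^{1:2})$ given $(S_t^2,A_t^2,\Gamma_{0:t}^{1:2})$ is the product of their priors irrespective of the conditioning value and irrespective of $\boldsymbol{\psi}$.

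It then remains to assemble the pieces. I would define $\tilde{f}_t^2(\Pi_t^2,\Gamma_t^1,\Gamma_t^2,Z_{t+1}^2)$ to be the probability mass function on $\mathcal{S}_{t+1}^2$ obtained by normalizing $s_{t+1}^2 \mapsto \sum_{s_t^2} K_t(s_{t+1}^2,Z_{t+1}^2 \mid s_t^2,\Gamma_t^{1:2})\,\Pi_t^2(s_t^2)$ over $s_{t+1}^2$ when its total mass is positive, and arbitrarily otherwise; by the previous paragraph this is well defined, $\boldsymbol{\psi}$-independent, and Borel measurable in $\Pi_t^2$ (numerator and denominator are absolutely convergent countable sums of measurable functions of $\Pi_t^2$). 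The Bayes computation above then reads $\Pi_{t+1}^2 = \tilde{f}_t^2(\Pi_t^2,\Gamma_t^1,\Gamma_t^2,Z_{t+1}^2)$ almost surely, which is the first claim. For the Markov property, conditioned on $(A_t^2,\Gamma_{0:t}^{1:2})$ both $\Pi_t^2$ and $\Gamma_t^{1:2}$ are deterministic, so for a Borel set $P^2 \subseteq \mathcal{P}_{t+1}^2$ we have $\{\Pi_{t+1}^2 \in P^2\} = \{Z_{t+1}^2 \in B\}$ with $B := \{z : \tilde{f}_t^2(\Pi_t^2,\Gamma_t^{1:2},z) \in P^2\}$, and $\mathbb{P}(Z_{t+1}^2 = z \mid A_t^2,\Gamma_{0:t}^{1:2}) = \sum_{s_t^2}\sum_{s_{t+1}^2} K_t(s_{t+1}^2,z \mid s_t^2,\Gamma_t^{1:2})\,\Pi_t^2(s_t^2)$ depends on the conditioning only through $(\Pi_t^2,\Gamma_t^{1:2})$ by the same argument; summing over $z \in B$ gives $\mathbb{P}(\Pi_{t+1}^2 \in P^2 \mid A_t^2,\Gamma_{0:t}^{1:2})$ as a function of $(\Pi_t^2,\Gamma_t^{1:2})$ alone, i.e.\ $\mathbb{P}(\Pi_{t+1}^2 \in P^2 \mid \Pi_t^2,\Gamma_t^{1:2})$. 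The main obstacle I anticipate is not the algebra but the two measure-theoretic points supporting it: justifying that the one-step kernel $K_t$ may be conditioned down to $(S_t^2,\Gamma_t^{1:2})$ in a $\boldsymbol{\psi}$-free way (the conditional-independence argument), and checking that $\tilde{f}_t^2$ is genuinely Borel measurable given that the $\Pi_t^1$-coordinate of $S_t^2$ ranges over the possibly countably infinite set $\mathcal{P}_t^1$, so that $\mathcal{P}_{t+1}^2 = \Delta(\mathcal{X}_{t+1}\times\mathcal{L}_{t+1}^2\times\mathcal{P}_{t+1}^1)$ is infinite dimensional and the interchange of the countable sums must be justified by absolute convergence.
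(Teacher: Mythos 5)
Your proposal is correct and follows essentially the same route as the paper: a Bayes-rule update of $\Pi_t^2$ using the constructed dynamics $\bar{f}_t^2,\bar{h}_t^2$, the identification $\mathbb{P}^{\boldsymbol{\psi}}(s_t^2\,|\,a_t^2,\gamma_{0:t-1}^{1:2})=\pi_t^2(s_t^2)$, and strategy-independence via the independence of the fresh primitives $(W_t,V_{t+1}^{1:2})$, followed by summing over $z_{t+1}^2$ for the Markov property; your one-step kernel $K_t$ is just a compact form of the paper's sums of indicators weighted by $\mathbb{P}(w_t,v_{t+1}^{1:2})$. The extra care you take (zero-mass normalization, measurability, the explicit conditional-independence justification) only sharpens details the paper leaves implicit.
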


\begin{proof}
Let $x_t$, $\gamma_t^1$, $\gamma_t^2$, $a_t^2$, and $\pi_t^1$ be realizations of $X_t$, $\Gamma_t^1$, $\Gamma_t^2$, $A_t^2$, and the distribution $\Pi_t^1$, respectively, for all $t$.
Then, using Bayes' rule
\begin{multline} 
    \mathbb{P}^{\boldsymbol{\psi}}(x_{t+1}, \ell_{t+1}^2, \pi_{t+1}^1 ~|~ a_{t+1}^2, \gamma_{0:t}^{1:2}) \\
    = \dfrac{\mathbb{P}^{\boldsymbol{\psi}}\big(x_{t+1}, \ell_{t+1}^2, \pi_{t+1}^1, z_{t+1}^2 ~|~  a^2_t, \gamma_{0:t}^{1:2}\big)}{\mathbb{P}^{\boldsymbol{\psi}}\big(z_{t+1}^2 ~|~ a^2_t, \gamma_{0:t}^{1:2}\big)}, \label{lem_6_1}
\end{multline}
where $a_{t+1}^2 = a_t^2 \cup z_{t+1}^2$.
Using the dynamics $\{\bar{f}_t^2, \bar{h}_t^2, \bar{c}_t^2\}$, we write that
$(x_{t+1}, \ell_{t+1}^2)$ $= \eta_t^2(s_t^2, \gamma_t^{1:2}, w_t, v_{t+1}^{1:2})$, $\pi_{t+1}^{1} = \xi_t^2(s_t^2,\gamma_t^{1:2}, w_t, v_{t+1}^{1:2})$, $z_{t+1}^2 = \bar{h}_{t}^2(s_t^2, \gamma_t^{1:2}, w_t, v_{t+1}^{1:2})$,
for some appropriate functions $\eta_t^2(\cdot)$ and $\xi_t^2(\cdot)$, where $s_t^2 = \{x_t, \ell_t^2, \pi_t^1\}$. Substituting these relationships into the numerator in the RHS of \eqref{lem_6_1} yields that
\begin{multline}
    \mathbb{P}^{\boldsymbol{\psi}}\big(x_{t+1}, \ell_{t+1}^2, \pi_{t+1}^1, z_{t+1}^2 ~|~  a^2_t, \gamma_{0:t}^{1:2}\big) \\
    = \sum_{s_t^2, w_t, v_{t+1}^{1:2}} \mathbb{I}[\eta_t^2(s_t^2, \gamma_t^{1:2}, w_t) = (x_{t+1}, \ell_{t+1}^2)] \cdot \mathbb{P}(w_t, v_{t+1}^{1:2}) \\
    \cdot \mathbb{I}[\xi_t^2(s_t^2,\gamma_t^{1:2}, w_{t}, v_{t+1}^{1:2}) = \pi_{t+1}^1] \cdot \mathbb{P}^{\boldsymbol{\psi}}\big(s_t^2 ~|~ a^2_t, \gamma_{0:t-1}^{1:2}\big) \\
    \cdot \mathbb{I}[\bar{h}_{t+1}^2(s_t^2,\gamma_t^{1:2}, w_t, v_{t+1}^{1:2}) = z_{t+1}^2 ], \label{lem_6_3}
\end{multline}
where $\mathbb{I}(\cdot)$ is the indicator function, and where we can drop the prescriptions $\gamma_{t}^{1:2}$ from the conditioning in the last term because they are completely determined given $\boldsymbol{\psi}$ and $a_t^2$. Note that in \eqref{lem_6_3}, $\mathbb{P}^{\boldsymbol{\psi}}\big(s_t^2 ~|~ a^2_t, \gamma_{0:t-1}^{1:2}\big) = \pi_t^2(s_t^2)$. %, i.e., it can be computed using the information state $\pi_t^2$. 
Next, we expand the denominator in \eqref{lem_6_1} as
\begin{multline}
    \mathbb{P}^{\boldsymbol{\psi}}\big(z_{t+1}^2 ~|~ a^2_t, \gamma_{0:t}^{1:2}\big) %
    = \sum_{s_t^2, w_t, v_{t+1}^{1:2}} \mathbb{P}(w_t, v_{t+1}^{1:2}) \\ \cdot \mathbb{I}[\bar{h}_{t}^2(s_t^2,\gamma_t^{1:2}, w_t, v_{t+1}^{1:2}) = z_{t+1}^2 ] \cdot \pi_t^2(s_t^2). \label{lem_6_4}
\end{multline}
%We recall that we can equivalently track either the tuple of probability distribution functions $\big(p(x_t, \ell_t^2, \cdot~|a_t^2, \gamma_{0:t-1}^{1:2}) : x_t \in \mathcal{X}_t, \ell_t^2 \in \mathcal{L}_t^2\big)$ or the probability distribution $\Pi_t^2$.
Then, the first result holds by constructing an appropriate function $\tilde{f}_t^2(\cdot)$ using \eqref{lem_6_1} - \eqref{lem_6_4}. To prove the second result, for any Borel subset $P^2 \subseteq \mathcal{P}_{t+1}^2$, we write that
\begin{multline}
    \mathbb{P}(\Pi_{t+1}^2 \in P^2~|~ a_t^2, \gamma^{1:2}_{0:t},\pi^2_{0:t}) 
    =\sum_{z^2_{t+1}}\mathbb{I}[\tilde{f}_t^2(\pi_t^2,\gamma^{1:2}_{t}, \\
    z_{t+1}^2) \in P^2]
    \cdot\mathbb{P}(z^2_{t+1}~|~a_t^2,\gamma^{1:2}_{0:t},\pi_{0:t}^2)\label{proof_2_1}.
\end{multline}
The second term in \eqref{proof_2_1} can be expanded as
$\mathbb{P}(z^2_{t+1}~|~a_t^2,\gamma^{1:2}_{0:t},\pi_{0:t}^2)
    =\sum_{s_t^2, w_t, v_{t+1}^{1:2}}\mathbb{I}[\bar{h}_{t}^2(s_t^2,\gamma_t^{1:2},w_t,$ $  v_{t+1}^{1:2}) = z_{t+1}^2]
    \cdot\mathbb{P}(v_{t+1}^{{1:2}}, w_t)\cdot\pi_t^2(s_t^2)$.
The proof is complete by substituting this equation into \eqref{proof_2_1}.
\end{proof}

\begin{lemma} \label{lem_pi_cost}
There exists a function $\tilde{c}^2_t(\cdot)$ for all $t$, such that
\begin{align}
    \mathbb{E}^{\boldsymbol{g}}[c_t(X_t,U_t^{1:2}) ~ | ~ A_t^2,\Gamma_t^{1:2}] &= \tilde{c}^2_t(\Pi_t^2,\Gamma_t^{1:2}). \label{eq_lem_pi_cost_2}
\end{align}
\end{lemma}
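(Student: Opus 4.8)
The plan is to fix realizations of the conditioning variables, use the two-stage generation of the actions to rewrite the integrand as a deterministic function of the state $S_t^2$, and then recognize the resulting average as an integral of the cost function $\bar{c}_t^2(\cdot)$ already constructed for Problem \ref{problem_3} against the information state $\Pi_t^2$. Concretely, I would fix realizations $a_t^2$, $\gamma_t^1$, $\gamma_t^2$ of $A_t^2$, $\Gamma_t^1$, $\Gamma_t^2$. Since agent $1$ computes $U_t^1 = \Gamma_t^1(\Pi_t^1)$ and agent $2$ computes $U_t^2 = \Gamma_t^2(L_t^2)$, the integrand equals $c_t(X_t, \gamma_t^1(\Pi_t^1), \gamma_t^2(L_t^2)) = \bar{c}_t^2(S_t^2, \gamma_t^{1:2})$, a deterministic function of $S_t^2 = \{X_t, L_t^2, \Pi_t^1\}$ and the fixed prescriptions. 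Replacing $\mathbb{E}^{\boldsymbol{g}}$ by the equivalent expectation under the consistent prescription strategy $\boldsymbol{\psi}$, this gives
\begin{align*}
\mathbb{E}^{\boldsymbol{g}}[c_t(X_t,U_t^{1:2}) \mid a_t^2, \gamma_t^{1:2}] = \sum_{s_t^2} \bar{c}_t^2(s_t^2, \gamma_t^{1:2}) \, \mathbb{P}^{\boldsymbol{\psi}}(s_t^2 \mid a_t^2, \gamma_t^{1:2}),
\end{align*}
where the sum runs over the at most countably many realizations of $S_t^2$.

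Next I would show that the conditioning on $\gamma_t^{1:2}$ is redundant. Given $\boldsymbol{\psi}$, we have $\Gamma_t^{1:2} = \psi_t^{1:2}(A_t^2)$, and by perfect recall of the accessible information $A_{0:t-1}^2 \subseteq A_t^2$, so $\Gamma_{0:t}^{1:2}$ are all deterministic functions of $A_t^2$. Hence, for any realization of $\Gamma_{0:t}^{1:2}$ consistent with $a_t^2$ under $\boldsymbol{\psi}$, we get $\mathbb{P}^{\boldsymbol{\psi}}(s_t^2 \mid a_t^2, \gamma_t^{1:2}) = \mathbb{P}^{\boldsymbol{\psi}}(s_t^2 \mid a_t^2, \gamma_{0:t-1}^{1:2}) = \pi_t^2(s_t^2)$ by the definition of $\Pi_t^2$ in \eqref{pi_2_def}. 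Substituting this back yields the claim with $\tilde{c}_t^2(\pi_t^2, \gamma_t^{1:2}) := \sum_{s_t^2} \bar{c}_t^2(s_t^2, \gamma_t^{1:2}) \, \pi_t^2(s_t^2)$, which is independent of the choice of strategies.

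I expect the main obstacle to be the bookkeeping in the second step: carefully justifying that augmenting the conditioning with the prescriptions $\Gamma_t^{1:2}$ (and implicitly $\Gamma_{0:t-1}^{1:2}$) does not change the conditional law of $S_t^2$, and that the object obtained is precisely the information state $\Pi_t^2$ rather than a strategy-dependent surrogate. The first step is a routine substitution using the two-stage action generation and the definition of $\bar{c}_t^2(\cdot)$, while the countability of the realizations of $\Pi_t^1$ (noted earlier) is what legitimizes writing the sum over $s_t^2$.
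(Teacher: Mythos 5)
Your proposal is correct and follows essentially the same route as the paper's proof: rewrite the cost as $\bar{c}_t^2(S_t^2,\Gamma_t^{1:2})$ via the two-stage action generation, pass to the consistent prescription strategy $\boldsymbol{\psi}$, sum against $\mathbb{P}^{\boldsymbol{\psi}}(s_t^2\,|\,a_t^2,\gamma_t^{1:2})$, and drop the prescriptions from the conditioning since they are determined by $\boldsymbol{\psi}$ and $a_t^2$, yielding $\tilde{c}_t^2(\pi_t^2,\gamma_t^{1:2}) := \sum_{s_t^2}\bar{c}_t^2(s_t^2,\gamma_t^{1:2})\,\pi_t^2(s_t^2)$. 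Your added bookkeeping for why conditioning on $\Gamma_t^{1:2}$ (versus $\Gamma_{0:t-1}^{1:2}$ in the definition of $\Pi_t^2$) is harmless just makes explicit what the paper states in one line.
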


\begin{proof}
Let $a_t^3$, $\gamma_t^{1:2}$, and $\pi_t^2$ be realizations of the random variables $A_t^3$, $\Gamma_t^{1:2}$, and the conditional distribution $\Pi_t^2$, respectively, for all $t = 0,\dots,T$. To prove the result, we expand the expectation as
$\mathbb{E}^{\boldsymbol{g}}[c_t(X_t,U_t^{1:2}) ~|~ a_t^2, \gamma_t^{1:2}] = \mathbb{E}^{\boldsymbol{\psi}}[\bar{c}_t^2(S^2_t,\Gamma_t^{1:2}) ~|~ a_t^2, \gamma_t^{1:2}]
    = \sum_{s^2_t} \bar{c}_t^2(s^2_t,\gamma_t^{1:2}) \cdot \mathbb{P}^{\boldsymbol{\psi}}(S^2_t = s^2_t ~|~ a_t^2,\gamma_t^{1:2})
    = \sum_{s^2_t} \bar{c}_t^2(s^2_t,\gamma_t^{1:2}) \cdot \pi_t^2(s^2_t) =: \tilde{c}^2_t(\pi_t^2,\gamma_t^{1:2}),$
where we can drop the prescripitions $\gamma_t^{1:2}$ from the conditioning because they known given $\boldsymbol{\psi}$ and $a_t^2$.
\end{proof}

We call $\Pi_t^2$ the information state of agent $2$ at time $t$. As a consequence of Lemmas \ref{pi_1_evol} and \ref{lem_pi_1_cost}, the information state yields the following result for Problem \ref{problem_3}.

\begin{theorem} \label{pbp_struct_result_2}
    In Problem \ref{problem_3}, without loss of optimality, we can restrict our attention  to prescription strategies $\boldsymbol{\psi}^{*}$ with the structural form
    \begin{gather} \label{eq_pbp_struct_result_2_1}
        \Gamma_t^{k} = \psi_t^{*k}\big(\Pi_t^2\big),\quad k = 1,2, \quad t = 0,\dots,T.
    \end{gather}
\end{theorem}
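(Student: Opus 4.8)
The plan is to treat Problem \ref{problem_3} as a standard centralized partially observed stochastic control problem: at time $t$ the controller (agent $2$) knows $\{A_t^2,\Gamma_{0:t-1}^{1:2}\}$, chooses the action $\Gamma_t^{1:2}$, incurs the cost $\bar{c}_t^2(S_t^2,\Gamma_t^{1:2})$, and then observes the new increment $Z_{t+1}^2$. The first thing I would record is that $\Pi_t^2$, as defined in \eqref{pi_2_def}, is exactly the conditional law of the state $S_t^2=\{X_t,L_t^2,\Pi_t^1\}$ given the controller's information $\{A_t^2,\Gamma_{0:t-1}^{1:2}\}$, and that $\Pi_t^2$ is a deterministic function of that information. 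Consequently, any conditional expectation of a function of $S_t^2$ given $\{A_t^2,\Gamma_{0:t-1}^{1:2}\}$ is an expectation taken against $\Pi_t^2$, which is precisely what makes $\Pi_t^2$ a candidate sufficient statistic.

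The main step is a backward-induction dynamic program on $t$. At $t=T$, minimizing $\mathbb{E}^{\boldsymbol{\psi}}[\bar{c}_T^2(S_T^2,\Gamma_T^{1:2})\mid A_T^2,\Gamma_{0:T-1}^{1:2}]$ over $\Gamma_T^{1:2}$ gives, by Lemma \ref{lem_pi_cost}, the value $V_T(\Pi_T^2):=\min_{\gamma^{1:2}\in\mathcal{F}_T^1\times\mathcal{F}_T^2}\tilde{c}_T^2(\Pi_T^2,\gamma^{1:2})$, a function of $\Pi_T^2$ alone, whose minimizer may be chosen as a function of $\Pi_T^2$. For the inductive step, if the optimal cost-to-go from $t+1$ is $V_{t+1}(\Pi_{t+1}^2)$, then the cost-to-go from $t$ is
\[
\inf_{\Gamma_t^{1:2}}\mathbb{E}^{\boldsymbol{\psi}}\big[\bar{c}_t^2(S_t^2,\Gamma_t^{1:2})+V_{t+1}(\Pi_{t+1}^2)\,\big|\,A_t^2,\Gamma_{0:t-1}^{1:2}\big].
\]
By Lemma \ref{lem_pi_cost} the running-cost term equals $\tilde{c}_t^2(\Pi_t^2,\Gamma_t^{1:2})$; by Lemma \ref{pi_2_evol} the process $\Pi_t^2$ is a controlled Markov chain whose transition depends only on $(\Pi_t^2,\Gamma_t^{1:2})$ and not on $\boldsymbol{\psi}$, so $\mathbb{E}[V_{t+1}(\Pi_{t+1}^2)\mid A_t^2,\Gamma_{0:t}^{1:2}]=\mathbb{E}[V_{t+1}(\Pi_{t+1}^2)\mid\Pi_t^2,\Gamma_t^{1:2}]$, again a function of $(\Pi_t^2,\Gamma_t^{1:2})$. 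Hence the bracketed expression is a function of $(\Pi_t^2,\Gamma_t^{1:2})$; since $\mathcal{F}_t^1\times\mathcal{F}_t^2$ is finite the infimum is attained, and a minimizing pair $(\psi_t^{*1}(\Pi_t^2),\psi_t^{*2}(\Pi_t^2))$ can be selected as a function of $\Pi_t^2$. This defines $V_t(\Pi_t^2)$, closes the induction, and shows that the collected selectors form an optimal prescription strategy of the form \eqref{eq_pbp_struct_result_2_1}.

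Almost all of the substance is already carried by Lemmas \ref{pi_2_evol} and \ref{lem_pi_cost}, so what remains is bookkeeping, and I expect the only delicate points to be (i) confirming precisely that the controller's information in Problem \ref{problem_3} is $\{A_t^2,\Gamma_{0:t-1}^{1:2}\}$ and that $\Pi_t^2$ is its associated posterior on $S_t^2$, so that the tower-property reductions above are legitimate, and (ii) the measure-theoretic technicalities arising because $\Pi_t^1$, and therefore the support of $\Pi_t^2$, ranges over the (for infinite horizon, countably infinite) set $\mathcal{P}_t^1$. For a finite horizon every realization space involved is in fact finite — as noted in the remark following Theorem \ref{pbp_struct_result} — so the argument is literally the standard sufficient-statistic argument for partially observed centralized problems \cite[page 79]{varaiya_book}, identical in structure to the proof of Theorem \ref{pbp_struct_result}; I would therefore state it at that level of detail rather than reproving the partially observed reduction from scratch.
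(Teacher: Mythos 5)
Your proposal is correct and is essentially the proof the paper has in mind: the paper omits the argument, citing the standard sufficient-statistic reduction for centralized partially observed problems \cite[page 79]{varaiya_book}, and your backward-induction DP built on Lemma \ref{lem_pi_cost} (cost as a function of $\Pi_t^2,\Gamma_t^{1:2}$) and Lemma \ref{pi_2_evol} (strategy-independent Markovian update of $\Pi_t^2$) is exactly that reduction, with the finiteness caveats handled appropriately.
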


\begin{proof}
This proof follows similar arguments for centralized stochastic control problems in \cite[page 79]{varaiya_book}, and thus, it is omitted.
\end{proof}

Consider a prescription strategy $\boldsymbol{\psi}^* = (\boldsymbol{\psi}^{*1}, \boldsymbol{\psi}^{*2})$ which is an optimal solution to Problem \ref{problem_3}, and a control strategy $\boldsymbol{g}^{*}) = \boldsymbol{g}^{*1}, \boldsymbol{g}^{*2})$ given by $g_t^{*1}(\Pi_t^{1:2}) := \psi_t^{*1}(\Pi_t^2)(\Pi_t^1)$ and $g_t^{*2}(L_t^2, \Pi_t^2) := \psi_t^{*2}(\Pi_t^2)(L_t^2)$ for each $k = 1,2$ and $t = 0,\dots,T$. Using the same arguments as in Lemma \ref{lem_psi_g_relation}, we conclude that $\mathcal{J}(\boldsymbol{g}^{*}) = \mathcal{J}^2(\boldsymbol{\psi}^*)$ and subsequently, that $\boldsymbol{g}^*$ is the optimal solution to Problem \ref{problem_1}. Thus, without loss of optimality, we can restrict attention to control strategies $\boldsymbol{g}^*$ with the structural form $U_t^1 = g_t^{*1}(\Pi_t^1, \Pi_t^2)$ and $U_t^2 = g_t^{*2}(L_t^2, \Pi_t^2)$ for all $t=0,\dots,T$.

\begin{remark}
Consider a system where, the feasible sets of system variables are time invariant, i.e., $\mathcal{X}_t = \mathcal{X}$, $\mathcal{W}_t = \mathcal{W}$, $\mathcal{V}_t^k = \mathcal{V}^k$, $\mathcal{Y}_t^k = \mathcal{Y}^k$ for each $k = 1,2$ and $t=0,\dots,T$, and the information structure satisfies $\mathcal{L}_t^2 = \mathcal{L}^2$, $\mathcal{Z}_t^1 = \mathcal{Z}^1$, $\mathcal{Z}_t^2 = \mathcal{Z}^2$ for all $t$. Note that the set $\mathcal{M}_t^1$ still grows in size with time. However, the spaces $\mathcal{P}^1 = \Delta(\mathcal{X} \times \mathcal{L}^2)$ and $\mathcal{P}^2 = \Delta (\mathcal{X} \times \mathcal{L}^2 \times \mathcal{P}^1)$ are time invariant and subequently, our optimal control strategies have time-invariant domains for both agents. This is a useful property to derive and implement optimal control strategies for long time horizons. 
%Thus, the derived control laws can be implemented over an arbitrarily long time horizon.
%We note that if the sets of feasible spaces for the state and private information of agent $2$ are time invariant, i.e., $|\mathcal{X}_t \times\mathcal{L}^2_t| \leq m$, $m \in \mathbb{N}$, for each time $t = 0,\dots,T$, then the space $\mathcal{P}^1 = \Delta(\mathcal{X}_t \times \mathcal{L}^2_t)$ and subsequently, the space $\mathcal{P}^2 = \Delta (\mathcal{X}_t \times \mathcal{L}^2_t \times \mathcal{P}^1)$ are also time invariant. Thus, our structural forms have identified control laws for each agent $k = 1,2$ with domains that do not grow in size with time, as long as agent $2$ regularly shares data with agent $1$.
\end{remark}

\subsection{Dynamic Programming Decomposition} \label{subsection:DP}

In this subsection, we construct the value functions and corresponding control laws to form a DP decomposition which can derive the optimal prescription strategies.
Let $\gamma_t^k$ and $\pi_t^k$ be the realizations of the prescription $\Gamma_t^k$ and information state $\Pi_t^k$, respectively, for each $k = 1,2$ and $t=0,\dots,T$. Then, we recursively define the value functions
\begin{multline}
    J_t(\pi_t^2) := \inf_{\gamma_t^{1:2} \in \mathcal{F}_t^1 \times \mathcal{F}_t^2} \tilde{c}^2_t\big(\pi_t^2,\gamma_t^{1:2}\big) \\
    + \mathbb{E}^{\boldsymbol{\psi}}\Big[{J}_{t+1}\big(\tilde{f}_t^2(\pi_t^2, \gamma_t^{1:2}, Z_{t+1}^2) \big) ~|~\pi_t^{2}, \gamma_t^{1:2}\Big], \label{value_time_t}
\end{multline}
for all $t = 0,\dots,T$ and define $J_{T+1}(\pi_{T+1}^2):= 0$ identically. For each agent $k =1,2$, the prescription law at time $t$ is $\gamma_t^{*k} = \psi_t^{*k}(\pi_t^{2})$, i.e., the $\arg\inf$ in the RHS of \eqref{value_time_t}. 
%The value functions \eqref{value_time_t} and corresponding control laws form a DP for the team. 
The prescription strategy $\boldsymbol{\psi}^*$ derived using this DP decomposition can be shown to be the optimal solution to Problem \ref{problem_2} using standard arguments \cite{17, nayyar2019common}. Recall that given an optimal strategy $\boldsymbol{\psi}^*$ derived using this DP decomposition, we can also derive the optimal control strategy $\boldsymbol{g}^*$ for Problem \ref{problem_1}.

%\begin{remark}
%Recall that when all feasible sets are time invariant, the control strategies have time invariant domains. Thus, we can extend our results for systems that have an infinite time horizon and use a discounted cost criterion. Under some regularity conditions, the associated DP is written as a fixed point equation \cite[Theorem 5]{17}.
%\end{remark}

\begin{remark}
At each $t=0,\dots,T$, our DP decomposition requires solving an optimization problem for each realization $\pi_t^2$ of the information state $\Pi_t^2$, which is a tuple of probability mass functions. Optimizing over probability mass functions is a computationally challenging problem. Next, we present two different approaches to alleviate the computational implications. 
In Section \ref{section:decoupled}, we show how we can simplify our results when the system dynamics and information structure have additional favorable properties. In Section \ref{section:Implementation}, we present an approximation for the information states which can reduce the number of computations required to derive an approximately optimal strategy.
\end{remark}

\section{Simplification for Decoupled Dynamics} \label{section:decoupled}

In this subsection, we show how our results can be simplified when both agents have decoupled state and observation dynamics. We denote the state of each agent $k = 1,2$ at time $t$ by $X_t^k \in \mathcal{X}_t^k$. Starting at $X_0^k$, each state evolves as
\begin{align}
    X_{t+1}^k &= f_t^k(X_t^k, U_t^k, W_t^k), \quad t =0,\dots,T-1,
\end{align} 
for $k = 1,2$, where $W_t^k \in \mathcal{W}_t^k$ is a disturbance acting only on $X_t^k$. The observation of agent $k$ at time $t$ is
$Y_t^k = h_t^k(X_t^k, V_t^k)$. We assume that all primitive random variables $\{X_0^k, W_t^{k}, V_t^k: k = 1,2, \; t =0,\dots,T\}$ are independent of each other and that the cost to the system at each $t=0,\dots,T$ is $c_t(X_t^{1:2}, U_t^{1:2}) \in \mathbb{R}_{\geq0}$. Without loss of optimality, we restrict attention to control strategies where $\boldsymbol{g}^1$ takes the form $U_t^1 = g_t^1(\Pi_t^1, \Pi_t^2)$ and where $\boldsymbol{g}^2$ takes the form $U_t^2 = g_t^2(L_t^2, \Pi_t^2)$, for all $t=0,\dots,T$. Here, recall that $\Pi_t^1 = \mathbb{P}^{\boldsymbol{g}}\big(X^{1:2}_t,L_t^2|M_t^1, \Gamma_{0:t-1}^2\big)$ and $\Pi_t^2 = \mathbb{P}^{\boldsymbol{g}}\big(X^{1:2}_t,L_t^2,\Pi_t^1|A_t^2, \Gamma_{0:t-1}^{1:2}\big)$. Next, we show that the information state $\Pi_t^1$ can be simplified using the decoupled dynamics.

%One-directional communication from agent $2$ to agent $1$ at each $t=0,\dots,T$ with a delay of $1$ time step yields the following information structure for agent $1$: $M_t^1 = \{Y_{0:t}^{1}, Y_{0:t-1}^2, U_{0:t-1}^{1:2}\}$ with 

%with a delay of $1$ time step implies that for all $t=0,\dots,T$, $M_t^1 = \{Y_{0:t}^{1}, Y_{0:t-1}^2, U_{0:t-1}^{1:2}\}$ and $M_t^2 = \{Y_{0:t}^2, U_{0:t-1}^2\}$. Furthermore, $L_t^2 := Y_t^2$, $A_t^2 := \{Y_{0:t-1}^2, U_{0:t-1}^2\}$ and $\Gamma_t^2(Y_t^2) = U_t^2$, for all $t$. The two information states at any time $t$ are given by the distributions
%$\Pi_t^1 = \mathbb{P}^{\boldsymbol{g}}\big(X^{1:2}_t,Y_t^2|M_t^1, \Gamma_{0:t-1}^2\big)$ taking values in $\mathcal{P}_t^1 = \Delta(\mathcal{X}_t^1 \times \mathcal{X}_t^2 \times \mathcal{Y}_t^2)$, and $\Pi_t^2 = \mathbb{P}^{\boldsymbol{g}}\big(X^{1:2}_t,Y_t^2,\Pi_t^1|A_t^2, \Gamma_{0:t-1}^{1:2}\big)$ taking values in $\mathcal{P}_t^2 = \Delta(\mathcal{X}_t^1 \times \mathcal{X}_t^2 \times \mathcal{Y}_t^2 \times \mathcal{P}_t^1)$. Next, we show that $\Pi_t^1$ can be simplified using the dynamics.

\begin{lemma} \label{lem_decouple_1}
    For each $k = 1,2$ and $t=0,\dots,T$, let $x_t^k$, $m_t^k$, $l_t^2$, and $a_t^2$ be realizations of the random variables $X_t^k$, $M_t^k$, $L_t^2$, and $A_t^2$, respectively. Then,
    \begin{align} 
        \mathbb{P}^{\boldsymbol{g}}(x_{t}^{1:2}, l_t^2~|~m_{t}^1) &= \mathbb{P}^{\boldsymbol{g}}(x_{t}^1 ~|~m_{t}^1) \cdot \mathbb{P}^{\boldsymbol{g}}(x_{t}^2, l_t^2~|~a_{t}^2). \label{decouple_1}
    \end{align}
\end{lemma}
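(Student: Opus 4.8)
The plan is to prove \eqref{decouple_1} by induction on $t$, recognizing that \eqref{decouple_1} is equivalent to the conjunction of two structural statements: (i) conditioned on $M_t^1$, the state $X_t^1$ is independent of $(X_t^2, L_t^2)$; and (ii) the conditional law $\mathbb{P}^{\boldsymbol{g}}(X_t^2, L_t^2 \mid M_t^1)$ depends on $M_t^1$ only through $A_t^2$, i.e. $\mathbb{P}^{\boldsymbol{g}}(x_t^2, \ell_t^2 \mid m_t^1) = \mathbb{P}^{\boldsymbol{g}}(x_t^2, \ell_t^2 \mid a_t^2)$. (Given (i), we may write $\mathbb{P}^{\boldsymbol{g}}(x_t^{1:2}, \ell_t^2 \mid m_t^1) = \mathbb{P}^{\boldsymbol{g}}(x_t^1 \mid m_t^1)\,\mathbb{P}^{\boldsymbol{g}}(x_t^2, \ell_t^2 \mid m_t^1)$, and (ii) rewrites the last factor.) The ingredients are the decoupled state and observation dynamics $f_t^k, h_t^k$ and the mutual independence of the primitives $\{X_0^k, W_t^k, V_t^k\}$.

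For the base case $t = 0$, there are no actions, and in the decoupled model $M_0^1 \setminus A_0^2$ consists only of agent-$1$ observations, which are functions of $(X_0^1, V_0^1)$, while $A_0^2$ and $L_0^2$ consist of agent-$2$ observations, which are functions of $(X_0^2, V_0^2)$. Since $X_0^1, V_0^1$ are jointly independent of $X_0^2, V_0^2$, the tuples $(X_0^1, M_0^1 \setminus A_0^2)$ and $(X_0^2, L_0^2, A_0^2)$ are independent, and (i)--(ii) at $t = 0$ follow from Bayes' rule. For the inductive step, I would first record two measurability facts: $U_t^1 = g_t^1(\Pi_t^1, \Pi_t^2)$ is a function of $M_t^1$ (because $\Pi_t^1$ is determined by $(M_t^1, \Gamma_{0:t-1}^2)$, $\Pi_t^2$ by $(A_t^2, \Gamma_{0:t-1}^{1:2})$, and all of $A_{0:t}^2, \Gamma_{0:t-1}^{1:2}$ are determined by $M_t^1$ via perfect recall and $\boldsymbol{\psi}$), while $U_t^2 = g_t^2(L_t^2, \Pi_t^2)$ is a function of $(L_t^2, A_t^2)$. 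Consequently, conditioned on $M_t^1$, the newly acquired agent-$1$ data in $M_{t+1}^1$ (i.e. $Y_{t+1}^1$, with $U_t^1$ already frozen) is a function of $(X_t^1, W_t^1, V_{t+1}^1)$ through $f_t^1, h_{t+1}^1$, whereas $Z_{t+1}^2 \subseteq Z_{t+1}^1$ and $(X_{t+1}^2, L_{t+1}^2)$ are functions of $(X_t^2, L_t^2, W_t^2, V_{t+1}^2)$ through $f_t^2, h_{t+1}^2$ and $g_t^2$.

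Then, from the induction hypothesis $X_t^1 \indep (X_t^2, L_t^2) \mid M_t^1$ and the fact that the future noises $W_t^1, V_{t+1}^1, W_t^2, V_{t+1}^2$ are mutually independent and independent of the past, I would conclude that the ``agent-$1$ group'' $\{X_t^1, W_t^1, V_{t+1}^1\}$ is conditionally independent of the ``agent-$2$ group'' $\{X_t^2, L_t^2, W_t^2, V_{t+1}^2\}$ given $M_t^1$. Writing $M_{t+1}^1 = M_t^1 \cup Z_{t+1}^1$ and applying Bayes' rule to $\mathbb{P}^{\boldsymbol{g}}(x_{t+1}^{1:2}, \ell_{t+1}^2 \mid m_{t+1}^1)$ exactly as in the proof of Lemma \ref{pi_2_evol}, the numerator factors into an agent-$1$ term depending only on $m_{t+1}^1$ and an agent-$2$ term which, by hypothesis (ii), depends on $m_t^1$ only through $a_t^2$, hence on $m_{t+1}^1$ only through $a_{t+1}^2 = a_t^2 \cup z_{t+1}^2$; the normalizing constant factors the same way. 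A short further step (dropping $z_{t+1}^2$ from the first factor's conditioning via the same conditional independence, and absorbing $z_{t+1}^2$ into $a_{t+1}^2$ in the second) identifies the factors as $\mathbb{P}^{\boldsymbol{g}}(x_{t+1}^1 \mid m_{t+1}^1)$ and $\mathbb{P}^{\boldsymbol{g}}(x_{t+1}^2, \ell_{t+1}^2 \mid a_{t+1}^2)$, which is \eqref{decouple_1} at $t+1$, i.e. (i)--(ii) at $t+1$, closing the induction.

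The hard part will be the bookkeeping in the inductive step: one must argue that conditioning on $M_t^1$ genuinely decouples the two subsystems even though $U_t^1$ may depend on the shared agent-$2$ history $A_t^2$ (so $X_{t+1}^1$ is \emph{not} unconditionally independent of agent $2$'s variables) — the resolution being that, given $M_t^1$, the value $U_t^1$ is frozen and the agent-$1$ update involves only $X_t^1$ and the fresh noise $W_t^1$ — and, symmetrically, that the likelihood of the newly shared data $Z_{t+1}^2$ given the past depends on the past only through $(X_t^2, L_t^2, A_t^2)$. Once the conditional independence $\{X_t^1, W_t^1, V_{t+1}^1\} \indep \{X_t^2, L_t^2, W_t^2, V_{t+1}^2\} \mid M_t^1$ is established, everything else is a routine Bayes-rule manipulation that mirrors Lemma \ref{pi_2_evol}.
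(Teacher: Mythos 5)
Your proposal is correct and follows essentially the same route as the paper's proof: induction on $t$, a base case resting on the independence of the two agents' primitive random variables, and an inductive step that applies Bayes' rule to the new information $Z_{t+1}^1$ and factorizes the one-step update using the decoupled dynamics, the frozen controls $u_t^1 = g_t^1(m_t^1)$ and $u_t^2 = \gamma_t^2(l_t^2)$, and the induction hypothesis, before re-identifying the factors as $\mathbb{P}^{\boldsymbol{g}}(x_{t+1}^1\,|\,m_{t+1}^1)$ and $\mathbb{P}^{\boldsymbol{g}}(x_{t+1}^2, l_{t+1}^2\,|\,a_{t+1}^2)$. The only stylistic difference is that you phrase the key step as a conditional-independence statement between the two agents' variable groups given $M_t^1$, whereas the paper carries out the equivalent factorization explicitly with indicator-function sums.
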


\begin{proof}
Given the realizations $x_t^k$, $y_t^k$, $u_t^k$, $\gamma_t^k$ and $l_t^2$ of $X_t^k$, $Y_t^k$, $U_t^k$, $\Gamma_t^k$, and $L_t^2$, respectively, for each $k = 1,2$ and $t=0,\dots,T$, we prove \eqref{decouple_1} by mathematical induction. At $t=0$, depending on the information sharing pattern of the system, there are two possible realizations of the memory of agent $1$, either $m_0^1 = \{y_0^1\}$ or $m_0^1 = \{y_0^{1},y_0^2\}$. For the first realization of the memory of agent $1$, the private information of agent $2$ is $l_0^2 = \{y_0^2\}$, and thus, we can expand the LHS of \eqref{decouple_1} as
$\mathbb{P}^{\boldsymbol{g}}(x_0^{1:2},y_0^2|m_0^1) = \mathbb{P}^{\boldsymbol{g}}(x_0^{1:2},y_0^2|y_0^{1}) = \mathbb{P}^{\boldsymbol{g}}(x_0^1|y_0^{1}) \cdot \mathbb{P}^{\boldsymbol{g}}(x_0^2,y_0^2)$, where recall that the observation $y_0^k$ depends only on $x_0^k$ for each $k$, and the primitive random variables are independent of each other. For the second realization of the memory of agent $1$, note that $l_t^2 = \emptyset$ because $l_t^2 \cap m_t^1 = \emptyset$, and thus, we can expand the LHS as $\mathbb{P}^{\boldsymbol{g}}(x_0^{1:2}|m_0^1) = \mathbb{P}^{\boldsymbol{g}}(x_0^1|y_0^{1})\cdot \mathbb{P}^{\boldsymbol{g}}(x_0^2|y_0^{2})$. For both cases at $t=0$, we have shown the LHS is equal to the RHS in \eqref{decouple_1}. This forms the basis of our induction. Next, we consider the induction hypothesis that \eqref{decouple_1} holds at each $0,\dots,t$, and expand the LHS at $t+1$ as
    \begin{multline} \label{proof_7_1}
        \mathbb{P}^{\boldsymbol{g}}(x_{t+1}^{1:2}, l_{t+1}^2~|~m_{t+1}^1) = \dfrac{\mathbb{P}^{\boldsymbol{g}}(x_{t+1}^{1:2}, l^{2}_{t+1}, z_{t+1}^1~|~m_t^1)}{\mathbb{P}^{\boldsymbol{g}}(z_{t+1}^1~|~m_t^1)} \\
        = \dfrac{\mathbb{P}^{\boldsymbol{g}}(x_{t+1}^{1:2}, l^{2}_{t+1}, z_{t+1}^1~|~m_t^1)}{\sum_{x_{t+1}^{1:2},l_{t+1}^2}\mathbb{P}^{\boldsymbol{g}}(x_{t+1}^{1:2}, l^{2}_{t+1}, z_{t+1}^1~|~m_t^1)}.
    \end{multline}
    %where we drop $u_t^{1:2}$ from the conditioning because they are known given $\boldsymbol{g}$ and $m_t^1$. 
    Note that in the partially accessible information structure, $l_{t+1}^2 \cup z_{t+1}^1 = l_t^2 \cup \{y_{t+1}^{1:2}, u_t^{1:2}\}$. Thus, we can write that $\mathbb{P}^{\boldsymbol{g}}(x_{t+1}^{1:2}, l^{2}_{t+1}, z_{t+1}^1~|~m_t^1) = \mathbb{P}^{\boldsymbol{g}}(x_{t+1}^{1:2}, y_{t+1}^{1:2}, u_t^{1:2}, l^{2}_{t}~|~m_t^1) = \mathbb{P}^{\boldsymbol{g}}(y^1_{t+1}|x_{t+1}^1) \cdot \mathbb{P}^{\boldsymbol{g}}(y^2_{t+1}|x_{t+1}^2) \cdot \mathbb{I}[g_t^1(m_t^1) = u_t^1]$ $\cdot \mathbb{I}[\gamma_t^2(l_t^2) = u_t^2] \cdot \mathbb{P}^{\boldsymbol{g}}(x_{t+1}^{1:2}, l_t^2|m_t^1)$, where $\mathbb{I}(\cdot)$ is the indicator function, and where $\gamma_t^2$ and $u_t^1$ are completely determined given $m_t^1$ and $\boldsymbol{g}$. Furthermore, we expand the last term as $\mathbb{P}^{\boldsymbol{g}}(x_{t+1}^{1:2},l^2_{t}|m_t^1, u_t^1, \gamma_t^2) = \sum_{x_t^{1:2}, w_t^{1:2}} \mathbb{I}[f_t^1(x_t^1, u_t^1, w_t^1) = x_{t+1}^1] \cdot \mathbb{I}[f_t^2(x_t^2, \gamma_t^2(l_t^2), w_t^2) = x_{t+1}^2]\cdot \mathbb{P}(w_t^{1},w_t^2) \cdot \mathbb{P}^{\boldsymbol{g}}(x_t^{1:2},l_t^2|m_t^1)$, where we can use the induction hypothesis to obtain $\mathbb{P}^{\boldsymbol{g}}(x_t^{1:2},l_t^2|m_t^1) = \mathbb{P}^{\boldsymbol{g}}(x_t^{1}|m_t^1)\cdot \mathbb{P}^{\boldsymbol{g}}(x_t^{1:2},l_t^2|a_t^2)$.
    Substituting these results into \eqref{proof_7_1}, and rearranging the terms yields
    $\mathbb{P}^{\boldsymbol{g}}(x_{t+1}^{1:2},l^{2}_{t+1}|m_{t+1}^1)
    = \frac{\mathbb{P}^{\boldsymbol{g}}(x_{t+1}^1, y^1_{t+1}, u^1_t|m_t^1)}{\mathbb{P}^{\boldsymbol{g}}(y_{t+1}^1, u^1_t|m_t^1)} \cdot \mathbb{P}^{\boldsymbol{g}}(x_{t+1}^2, l^2_{t+1}|a_t^2,z_{t+1}^2)
    = \mathbb{P}^{\boldsymbol{g}}(x_{t+1}^1|$ $m_t^1, y_{t+1}^1, u^1_t) \cdot \mathbb{P}^{\boldsymbol{g}}(x_{t+1}^2,y^2_{t+1}|a_{t+1}^2).$
    To complete the proof by mathematical induction, we need to show that the first term in the RHS of the previous equation is equal to the first term in the RHS of \eqref{decouple_1}. We achieve this by expanding $\mathbb{P}^{\boldsymbol{g}}(x_{t+1}^1|m_t^1, y_{t+1}^{1:2}, u_t^{1:2}, l_t^2)$ $= \frac{\mathbb{P}^{\boldsymbol{g}}(x_{t+1}^1,y_{t+1}^1|m_t^1, l_t^2, y_{t+1}^2)}{\sum_{x_{t+1}^1} \mathbb{P}^{\boldsymbol{g}}(x_{t+1}^1,y_{t+1}^1|m_t^1, l_t^2, y_{t+1}^2)}$ $= \frac{\sum_{x_t^1}\mathbb{P}^{\boldsymbol{g}}(y_{t+1}^1|x_{t+1}^1)\cdot\mathbb{P}^{\boldsymbol{g}}(x_{t+1}^1|x_t^1, u_t^1)\cdot \mathbb{P}^{\boldsymbol{g}}(x_t^1|m_t^1)}{\sum_{x_{t:t+1}^1} \mathbb{P}^{\boldsymbol{g}}(y_{t+1}^1|x_{t+1}^1)\cdot\mathbb{P}^{\boldsymbol{g}}(x_{t+1}^1|x_t^1, u_t^1)\cdot \mathbb{P}^{\boldsymbol{g}}(x_t^1|m_t^1)}$, where, in the last equality, we use Bayes' rule and the induction hypothesis. Recall that $z_{t+1}^1 \subseteq l_t^2 \cup \{y_{t+1}^{1:2}, u_t^{1:2}\}$. This implies that $\mathbb{P}^{\boldsymbol{g}}(x_{t+1}^1|m_t^1, y_{t+1}^{1:2}, u_t^{1:2}, l_t^2) = \mathbb{P}^{\boldsymbol{g}}(x_{t+1}^1|m_t^1, z_{t+1}^{1}) =\mathbb{P}^{\boldsymbol{g}}(x_{t+1}^1|m_t^1, y_{t+1}^{1}, u_t^{1})$, which complete the proof.
\end{proof}

Motivated by Lemma \ref{lem_decouple_1}, we define the distributions $\Theta_t^1 := \mathbb{P}^{\boldsymbol{g}}(X_t^1|M_t^1)$ and $\Theta_t^2 := \mathbb{P}^{\boldsymbol{g}}(X_t^2,L_t^2|A_t^2)$ and note that the information state $\Pi_t^1$ at each $t =0,\dots,T$ can be written as a function of $(\Theta_t^1, \Theta_t^2)$. Thus, at time $t$, agent $1$ can track the distributions $(\Theta_t^1, \Theta_t^2)$ instead of $\Pi_t^1$ to compute their optimal control action $U_t^1$. Next, we show that the evolution of $\Theta_t^k$, for each $k = 1,2$, is Markovian, strategy independent and decoupled from the dynamics of the other agent.

\begin{lemma}
    At each time $t$, there exists a function $\tilde{e}_t^k(\cdot)$, independent of the strategy $\boldsymbol{g}$, for all $k = 1,2$ such that
    \begin{gather}
        \Theta_{t+1}^k = \tilde{e}_t^k(\Theta_t^k, U_t^k, Y_{t+1}^k).
    \end{gather}
\end{lemma}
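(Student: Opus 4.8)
The plan is to obtain each one-step recursion by a Bayes'-rule expansion of the conditional probability that defines $\Theta_{t+1}^k$, in which the decoupled state rule $X_{t+1}^k=f_t^k(X_t^k,U_t^k,W_t^k)$, the decoupled observation rule $Y_{t+1}^k=h_{t+1}^k(X_{t+1}^k,V_{t+1}^k)$, and the mutual independence of the primitive variables force every factor that refers to the other agent to cancel against the normalization. This is the same mechanism exploited in the proof of Lemma~\ref{lem_decouple_1}, so I would reuse the identities established there rather than reprove them.

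For $k=1$ the statement is essentially immediate from the proof of Lemma~\ref{lem_decouple_1}. There it is shown that $\mathbb{P}^{\boldsymbol{g}}(x_{t+1}^1\mid m_t^1,z_{t+1}^1)=\mathbb{P}^{\boldsymbol{g}}(x_{t+1}^1\mid m_t^1,y_{t+1}^1,u_t^1)$ and, after a further Bayes step, that this equals
\[
\frac{\sum_{x_t^1}\mathbb{P}(y_{t+1}^1\mid x_{t+1}^1)\,\mathbb{P}(x_{t+1}^1\mid x_t^1,u_t^1)\,\theta_t^1(x_t^1)}{\sum_{x_{t:t+1}^1}\mathbb{P}(y_{t+1}^1\mid x_{t+1}^1)\,\mathbb{P}(x_{t+1}^1\mid x_t^1,u_t^1)\,\theta_t^1(x_t^1)},
\]
where $\theta_t^1$ is the realization of $\Theta_t^1$. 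The transition kernel is determined by $f_t^1$ and the law of $W_t^1$ and the observation kernel by $h_{t+1}^1$ and the law of $V_{t+1}^1$, so neither depends on $\boldsymbol{g}$, and $\{Y_{t+1}^1,U_t^1\}\subseteq Z_{t+1}^1$ always holds. Hence this ratio is the required strategy-independent function $\tilde{e}_t^1$ of $(\Theta_t^1,U_t^1,Y_{t+1}^1)$.

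For $k=2$ I would write $A_{t+1}^2=A_t^2\cup Z_{t+1}^2$ and apply Bayes' rule to $\Theta_{t+1}^2=\mathbb{P}^{\boldsymbol{g}}(X_{t+1}^2,L_{t+1}^2\mid A_t^2,Z_{t+1}^2)$, normalizing the joint law $\mathbb{P}^{\boldsymbol{g}}(x_{t+1}^2,l_{t+1}^2,z_{t+1}^2\mid a_t^2)$. Conditioning this joint on $(X_t^2,L_t^2)$, the factor $\mathbb{P}^{\boldsymbol{g}}(x_t^2,l_t^2\mid a_t^2)$ is, by definition, the realization $\theta_t^2(x_t^2,l_t^2)$ of $\Theta_t^2$; the remaining factor is expanded by summing out $(X_t^2,W_t^2,V_{t+1}^2)$ and substituting $U_t^2=\Gamma_t^2(L_t^2)$, $X_{t+1}^2=f_t^2(X_t^2,U_t^2,W_t^2)$ and $Y_{t+1}^2=h_{t+1}^2(X_{t+1}^2,V_{t+1}^2)$, so that it involves only the fixed kernels of $f_t^2$ and $h_{t+1}^2$ and the realized pair $(u_t^2,y_{t+1}^2)$. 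Collecting terms and renormalizing produces the strategy-independent map $\tilde{e}_t^2$.

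The step I expect to be the main obstacle is the bookkeeping of agent~$2$'s private information in the $k=2$ case. One must verify that the part of $Z_{t+1}^2$ that lies inside $L_t^2$ acts only by restricting and renormalizing $\Theta_t^2$ (needing no input beyond $\Theta_t^2$ itself), and --- more delicately --- that any agent-$1$ observations or actions that may appear in $Z_{t+1}^2$ factor out of the normalized ratio; the latter requires a conditional independence between the two subsystems given $A_t^2$, in the same spirit as the factorization proved in Lemma~\ref{lem_decouple_1}. Once this is settled, the update depends on the new data only through $(U_t^2,Y_{t+1}^2)$, as claimed.
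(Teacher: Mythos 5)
Your proposal matches the paper's approach: the paper omits this proof entirely, stating only that it follows the same Bayes'-rule arguments as Lemma \ref{pi_2_evol}, and your expansion via Bayes' rule with the decoupled kernels and primitive-variable independence --- reusing the filter identities already derived inside the proof of Lemma \ref{lem_decouple_1} for $k=1$ and the analogous factorization given $A_t^2$ for $k=2$ --- is precisely that argument. The bookkeeping issue you flag for $k=2$ (how elements of $L_t^2$ entering $Z_{t+1}^2$ and agent-$1$ variables are absorbed) is real but is glossed over by the paper as well, so your treatment is no less complete than the original.
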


\begin{proof}
The proof follows the same arguments as the ones in Lemma \ref{pi_2_evol} and thus, due to space limitations, it is omitted.
\end{proof}

Note that the distribution $\Theta_t^2$ is also available to agent $2$ at each $t=0,\dots,T$, because it depends only on the accessible information $A_t^2$. Subsequently, using the same sequence of arguments as the ones in Theorem \ref{pbp_struct_result_2}, we conclude that, without loss of optimality, agent $2$ can restrict attention to prescription strategies with the structural form $\Gamma_t^k = \psi_t^k\big(\mathbb{P}^{\boldsymbol{g}}(X_t^{1:2},L_t^2,\Theta_t^1|A_t^2), \Theta_t^2\big)$, for each $k =1,2$ and $t=0,\dots,T$. Next, we show that the term $\mathbb{P}^{\boldsymbol{g}}(X_t^{1:2},L_t^2,\Theta_t^1|A_t^2)$ in the argument of the prescription law for each $k$ can also be simplified using the decoupled dynamics of the system.

\begin{lemma} \label{lem_decouple_2}
    For each $k = 1,2$ and $t=0,\dots,T$, let $x_t^k$, $l_t^2$, $a_t^2$, and $\theta_t^k$ be realizations of the random variables $X_t^k$, $L_t^2$, $A_t^2$, and the probability distribution $\Theta_t^k$, respectively. Then,
    \begin{align}
        \mathbb{P}^{\boldsymbol{g}}(x_t^{1:2},l_t^2,\theta_t^1|a_t^2) = \mathbb{P}^{\boldsymbol{g}}(x_t^{1},\theta_t^1|a_t^2) \cdot \mathbb{P}^{\boldsymbol{g}}(x_t^{2},l_t^2|a_t^2).
    \end{align}
\end{lemma}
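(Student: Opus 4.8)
The plan is to obtain this factorization directly from Lemma~\ref{lem_decouple_1}, reading that lemma as a conditional‑independence statement. The two facts that make this work are that $A_t^2 \subseteq M_t^1$, so any realization $m_t^1$ fixes the consistent realization $a_t^2$, and that $\Theta_t^1 = \mathbb{P}^{\boldsymbol{g}}(X_t^1 \mid M_t^1)$ is a deterministic function of $M_t^1$.

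First I would divide the identity of Lemma~\ref{lem_decouple_1}, namely $\mathbb{P}^{\boldsymbol{g}}(x_t^{1:2}, l_t^2 \mid m_t^1) = \mathbb{P}^{\boldsymbol{g}}(x_t^1 \mid m_t^1)\,\mathbb{P}^{\boldsymbol{g}}(x_t^2, l_t^2 \mid a_t^2)$, by $\mathbb{P}^{\boldsymbol{g}}(x_t^1 \mid m_t^1)$, which is legitimate for every realization with $\mathbb{P}^{\boldsymbol{g}}(x_t^1, m_t^1) > 0$ (all other realizations contribute $0$ to both sides of the claim). This gives $\mathbb{P}^{\boldsymbol{g}}(x_t^2, l_t^2 \mid x_t^1, m_t^1) = \mathbb{P}^{\boldsymbol{g}}(x_t^2, l_t^2 \mid a_t^2)$, i.e., conditioned on $A_t^2$, the pair $(X_t^2, L_t^2)$ is independent of $(X_t^1, M_t^1)$. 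Since $(X_t^1, \Theta_t^1)$ is a function of $(X_t^1, M_t^1)$, this conditional independence is inherited by $(X_t^1, \Theta_t^1)$, which is exactly the asserted factorization. To make the last step explicit, I would write $\mathbb{P}^{\boldsymbol{g}}(x_t^{1:2}, l_t^2, \theta_t^1 \mid a_t^2) = \sum_{m_t^1} \mathbb{P}^{\boldsymbol{g}}(x_t^2, l_t^2 \mid x_t^1, m_t^1)\,\mathbb{P}^{\boldsymbol{g}}(x_t^1, m_t^1 \mid a_t^2)\,\mathbb{I}[\Theta_t^1(m_t^1) = \theta_t^1]$, where the sum ranges over realizations $m_t^1$ consistent with $a_t^2$ and $\Theta_t^1(\cdot)$ denotes the map computing $\Theta_t^1$ from a memory realization; substituting $\mathbb{P}^{\boldsymbol{g}}(x_t^2, l_t^2 \mid x_t^1, m_t^1) = \mathbb{P}^{\boldsymbol{g}}(x_t^2, l_t^2 \mid a_t^2)$ pulls this $a_t^2$‑measurable factor outside the sum, and the remaining sum collapses to $\mathbb{P}^{\boldsymbol{g}}(x_t^1, \theta_t^1 \mid a_t^2)$.

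Since this argument is essentially a repackaging of Lemma~\ref{lem_decouple_1} rather than a fresh computation, I do not expect a serious obstacle; the only points needing care are the treatment of zero‑probability realizations in the division step and the observation that $\Theta_t^1$ being a deterministic function of $M_t^1$ is what lets the conditional independence ``upgrade'' to include $\Theta_t^1$. A more self‑contained alternative would be to redo the induction of Lemma~\ref{lem_decouple_1} while carrying $\Theta_t^1$ along, using the strategy‑independent update $\Theta_{t+1}^1 = \tilde{e}_t^1(\Theta_t^1, U_t^1, Y_{t+1}^1)$ established just before this lemma; but the route above is shorter and I would prefer it.
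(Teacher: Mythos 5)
Your proof is correct, but it takes a genuinely different route from the paper. The paper's (omitted) argument repeats the induction of Lemma~\ref{lem_decouple_1}, carrying the extra variable $\Theta_t^1$ through the same Bayes-rule/decoupled-dynamics expansion at each step. You instead treat Lemma~\ref{lem_decouple_1} as already containing the needed content: dividing by $\mathbb{P}^{\boldsymbol{g}}(x_t^1\,|\,m_t^1)$ turns it into the conditional-independence statement $\mathbb{P}^{\boldsymbol{g}}(x_t^2,l_t^2\,|\,x_t^1,m_t^1)=\mathbb{P}^{\boldsymbol{g}}(x_t^2,l_t^2\,|\,a_t^2)$, i.e., $(X_t^2,L_t^2)$ is independent of $(X_t^1,M_t^1)$ given $A_t^2$ (using that $A_t^2\subseteq M_t^1$), and since $\Theta_t^1=\mathbb{P}^{\boldsymbol{g}}(X_t^1\,|\,M_t^1)$ is a deterministic function of $M_t^1$ under the fixed strategy, the independence is inherited by $(X_t^1,\Theta_t^1)$; your explicit sum over memories consistent with $a_t^2$, with the indicator $\mathbb{I}[\Theta_t^1(m_t^1)=\theta_t^1]$, correctly collapses to the asserted factorization, and you handle the only delicate point (zero-probability realizations in the division) appropriately. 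What each approach buys: yours is shorter and makes the conceptual content transparent — Lemma~\ref{lem_decouple_2} is a corollary of Lemma~\ref{lem_decouple_1} plus $M_t^1$-measurability of $\Theta_t^1$, with no fresh induction; the paper's inductive route is more self-contained and re-verifies the structure step by step without requiring the reader to recognize the factorization as a conditional-independence statement, but at the cost of repeating essentially the whole computation of Lemma~\ref{lem_decouple_1}.
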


\begin{proof}
The proof follows by mathematical induction using the same arguments as the ones in Lemma \ref{lem_decouple_1}, and thus, due to space limitations, it is omitted.
\end{proof}

Starting with the structural form of optimal prescription strategies in Theorem \ref{pbp_struct_result_2}, we can use Lemmas \ref{lem_decouple_1} and \ref{lem_decouple_2}, to conclude that in systems with decoupled dynamics, without loss of optimality, we can restrict attention to control strategies $\boldsymbol{g}^*$ with the structural form
\begin{align}
    U_t^1 &= g_t^{*1}\big[\Theta_t^1, \Theta_t^2, \mathbb{P}^{\boldsymbol{g}}(X_t^1, \Theta_t^1~|~A_t^2)\big], \label{decoupled_result_1} \\
    U_t^2 &= g_t^{*2}\big[L_t^2, \Theta_t^2, \mathbb{P}^{\boldsymbol{g}}(X_t^1, \Theta_t^1~|~A_t^2)\big], \; t = 0,\dots,T. \label{decoupled_result_2}
\end{align}

\begin{remark}
The control strategy $\boldsymbol{g}^1$ yielded a control law for each $t=0,\dots,T$ for agent $1$ with the form $U_t^1 = g_t^1(\Pi_t^1, \Pi_t^2)$, which has the domain $\Delta(\mathcal{X}_t^1 \times \mathcal{X}_t^2 \times \mathcal{L}_t^2) \times \Delta(\mathcal{X}_t^1 \times \mathcal{X}_t^2 \times \mathcal{L}_t^2 \times \Delta(\mathcal{X}_t^1 \times \mathcal{X}_t^2 \times \mathcal{L}_t^2))$. In contrast, the domain of the control law $g_t^{*1}$ in \eqref{decoupled_result_1} is $\Delta(\mathcal{X}^1_t) \times \Delta(\mathcal{X}^2_t \times \mathcal{L}_t^2) \times \Delta(\mathcal{X}^1_t \times \Delta(\mathcal{X}^1_t))$, which is a space with a smaller dimension than the one before. Similarly, the control laws of agent $2$ have a domain with a smaller dimension in \eqref{decoupled_result_2} than the control laws derived using Theorem \ref{pbp_struct_result_2}. Thus, we have obtained a simpler form for an optimal control strategy in systems with decoupled dynamics.
\end{remark}

We can further simplify the structural form of the optimal control strategies when agent $1$ can perfectly observe the state $X_t^1$, i.e, $Y_t^1 = X_t$ and subsequently, $X_t^1 \subseteq M_t^1$ at each $t=0,\dots,T$. Then, for a given realization $m_t^1$ of the memory $M_t^1$, the probability distribution $\Theta_t^1$ at each $t$ is simply given by $\Theta_t^1 = \mathbb{I}[X_t^1 = x_t^1]$ for the realization $x_t^1 \in m_t^1$ of $X_t^1$, where $\mathbb{I}$ is the indicator function. Using this result in \eqref{decoupled_result_1} and \eqref{decoupled_result_2}, we conclude that, without loss of optimality, we can restrict attention to control strategies $\boldsymbol{g}^*$ with the form
\begin{align}
    U_t^1 &= g_t^1\big[X_t^1, \Theta_t^2, \mathbb{P}^{\boldsymbol{g}}(X_t^1~|~A_t^2)\big], \\
    U_t^2 &= g_t^2\big[L_t^2,\Theta_t^2, \mathbb{P}^{\boldsymbol{g}}(X_t^1~|~A_t^2)\big], \quad t = 0,\dots,T.
\end{align}

\begin{remark}
When agent $1$ can perfectly observe their own state, at each $t$, the domains of the optimal control laws $g_t^{*1}$ and $g_t^{*2}$ are $\mathcal{X}_t^1 \times \Delta(\mathcal{X}_t^2\times \mathcal{L}_t^2) \times \Delta(\mathcal{X}_t^1)$ and $\mathcal{L}_t^2 \times \Delta(\mathcal{X}_t^2 \times \mathcal{L}_t^2) \times \Delta(\mathcal{X}_t^1)$, respectively. These domains are small enough that the optimal control laws at each $t$ are functions of distributions over finite sets instead of probability mass functions. Thus, the resulting DP can be solved using standard techniques for centralized problems.
\end{remark}

\section{Implementation} \label{section:Implementation}

%\subsection{Approximate Information State for Agent $1$}

In this subsection, we present an approach to approximate the information state $\Pi_t^1$ for all $t =0,\dots,T$ which ensures that the approximation can only take finitely many values. To simplify the notation, we restrict our attention to systems where $|\mathcal{X}_t \times \mathcal{L}_t^2| = m$, $m \in \mathbb{N}$ for all $t = 0,\dots,T$. Furthermore, we consider that the maximum cost at each $t$ is bounded above by $||c||_{\infty} < \infty$.
Recall that the space of feasible values for $\Pi_t^1$ is the simplex $\mathcal{P}^1 = \left\{\big(p(1), \dots, p(m)\big) \in [0,1]^m : \sum_{i = 1}^m p(i) = 1\right\}$.
We use the procedure in \cite{reznik2011algorithm} to generate a set of equally distributed points in $\mathcal{P}^1$. Specifically, we select a number $n \in \mathbb{N}$ and define a set $\mathcal{Q}_n := \big\{ \big(q(1), \dots, q(m)\big) \in \mathcal{P}^1 : n \cdot q(i)$ $\in\mathbb{N}_{\geq0}, i = 1,\dots,m \big\}$. The set $\mathcal{Q}_n$ forms a lattice containing $|\mathcal{Q}_n| = \binom{m+n-1}{m-1}$ points in the simplex $\mathcal{P}^1$.
For example, let $\mathcal{X}_t = \{0,1\}$ and $\mathcal{L}_t^2 = \emptyset$, which implies that $m = 2$. Then, by selecting $n = 2$ we construct the set $\mathcal{Q}_2 = \big\{(0,1), (1/2, 1/2), (1,0)\big\}$. Similarly, if $m = 3$ and we select $n = 2$, we construct the set $\mathcal{Q}_2 = \big\{(1,0,0),$ $(1/2,1/2,0), (0,1,0), (0, 1/2, 1/2), (0,0,1), (1/2, 0, 1/2)\big\}$. Next, we define the total variation distance between any point in $\mathcal{P}^1$ and $\mathcal{Q}_n$, and then, we use this metric to define an approximate information state.

\begin{definition}
The total variation distance between any $\pi_t^1 = (p(1), \dots, p(m)) \in \mathcal{P}^1$ and any $q_t = (q(1), \dots, q(m)) \in \mathcal{Q}_n$ is $|{\pi}_t^1 - q|_{TV} = \sum_{i=1}^{m} |p(i) - q(i)|$.
\end{definition}

\begin{definition}
%Let $\Pi_t^1$ be an information state for agent $1$ at time $t$.
The approximate information state for agent $1$ at each $t=0,\dots,T$ is a random variable $\hat{\Pi}_t^1$ which takes values in the finite set $\mathcal{Q}_n$, and which is given by
\begin{gather}
   \hat{\Pi}_t^1 =  \sigma(\Pi^1_t) := \arg \min_{q \in \mathcal{Q}_n} |\Pi_t^1 - q|_{TV}. \label{sigma_def}
\end{gather}
\end{definition}
Given any distribution $\pi_t^1 \in \mathcal{P}^1$, the corresponding realization of the approximate information state $\hat{\pi}_t^1 = \sigma(\pi_t^1)$ can be efficiently computed using the algorithm in \cite{reznik2011algorithm}. Next, we present an upper bound in the total variation distance between any information state and its approximation.
 
\begin{lemma} \label{lem_bound}
For all $t=0,\dots,T$, for any realization $\pi_t^1$ of the information state $\Pi_t^1$, it holds that $|{\pi}_t^1 - \sigma(\pi_t^1)|_{TV} \leq \frac{2 a \cdot (1+a)}{m \cdot n}$, where $a = \lfloor m/2 \rfloor \in \mathbb{N}$ and $\lfloor \cdot \rfloor$ is the floor function. 
%At any time $t$, let $\pi_t^1 \in \mathcal{P}^1$ and $\hat{\pi}_t^1 = \sigma(\pi_t^1) \in Q_n$. Then,
%    \begin{gather}
%        \max_{\substack{\pi_t^1 \in \mathcal{P}^1, \\ \hat{\pi}_t^1 \in Q_n}} \; \; \sum_{i = 1}^m |p_i - q_i| = \dfrac{2 a \cdot (1-a)}{m \cdot n},
%    \end{gather}
%where $|\cdot|$ is the $L-1$ norm. Furthermore, this implies that

\end{lemma}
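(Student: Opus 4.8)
The plan is to reduce the statement to a classical rounding (apportionment) estimate for probability vectors. First I would observe that any $q \in \mathcal{Q}_n$ can be written as $q = (k_1/n,\dots,k_m/n)$ with $k_i \in \mathbb{N}_{\geq 0}$ and $\sum_{i=1}^m k_i = n$, the last equality being forced by $q \in \mathcal{P}^1$. Hence, for a fixed realization $\pi_t^1 = (p(1),\dots,p(m))$, we have $|\pi_t^1 - q|_{TV} = \frac{1}{n}\sum_{i=1}^m |np(i) - k_i|$. Since $\sigma(\pi_t^1)$ is by definition a minimizer of $|\pi_t^1 - q|_{TV}$ over $q \in \mathcal{Q}_n$, it suffices to exhibit a single lattice point $q^\star \in \mathcal{Q}_n$ with small error, because then $|\pi_t^1 - \sigma(\pi_t^1)|_{TV} \leq |\pi_t^1 - q^\star|_{TV}$; no optimality of $q^\star$ itself is required.

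For $q^\star$ I would take the largest-remainder rounding: set $k_i := \lfloor np(i)\rfloor$, write $f_i := np(i) - \lfloor np(i)\rfloor \in [0,1)$ for the fractional parts, and let $r := \sum_{i=1}^m f_i$. Because $\sum_i np(i) = n$ and $\sum_i \lfloor np(i)\rfloor$ are integers, $r$ is an integer, and since each $f_i < 1$ we have $r \in \{0,1,\dots,m-1\}$. Let $S$ be a set of $r$ indices carrying the $r$ largest values of $f_i$, and raise $k_i$ by one for $i \in S$; the resulting vector lies in $\mathcal{Q}_n$. Splitting the error sum over $i \in S$ and $i \notin S$ and using $\sum_{i=1}^m f_i = r$ gives the identity $n\,|\pi_t^1 - q^\star|_{TV} = \sum_{i\in S}(1-f_i) + \sum_{i\notin S} f_i = 2\big(r - \sum_{i\in S} f_i\big)$.

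Next I would bound $\sum_{i\in S} f_i$ from below: since $S$ collects the $r$ largest fractional parts, their average is at least the overall average, so $\frac{1}{r}\sum_{i\in S} f_i \geq \frac{1}{m}\sum_{i=1}^m f_i = \frac{r}{m}$, whence $\sum_{i\in S} f_i \geq \frac{r^2}{m}$ and $n\,|\pi_t^1 - q^\star|_{TV} \leq \frac{2r(m-r)}{m}$. Finally I would maximize $r(m-r)$ over integers $r \in \{0,\dots,m-1\}$: the parabola peaks near $r=m/2$, and checking the two parities of $m$ shows $r(m-r) \leq a(a+1)$ with $a = \lfloor m/2 \rfloor$ (equality when $m$ is odd and $r \in \{a,a+1\}$; for even $m$ the integer maximum is $a^2 \leq a(a+1)$). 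Combining the estimates yields $|\pi_t^1 - \sigma(\pi_t^1)|_{TV} \leq \frac{2a(1+a)}{mn}$, which is the claimed bound (the case $r=0$, and more generally $m=1$, being trivial).

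I expect the only genuinely delicate points to be the bookkeeping identity $n\,|\pi_t^1 - q^\star|_{TV} = 2\big(r - \sum_{i\in S} f_i\big)$ together with the top-$r$-average inequality, and the discrete maximization of $r(m-r)$ over integer $r$ — the latter is exactly where the clean but slightly loose bound $a(1+a)$ (rather than the exact integer maximum $\lfloor m^2/4\rfloor$) enters. Everything else is routine; in particular the reference to the algorithm of \cite{reznik2011algorithm} is needed only for the efficient computation of $\sigma(\pi_t^1)$, not for the inequality itself.
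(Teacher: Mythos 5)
Your proof is correct, but it takes a genuinely different route from the paper: the paper's entire proof of this lemma is a citation of Proposition 2 of \cite{reznik2011algorithm}, which gives the worst-case $\ell_1$ error of quantization onto the type lattice $\mathcal{Q}_n$, whereas you supply a self-contained elementary argument. Your chain of steps checks out: writing each $q \in \mathcal{Q}_n$ as $(k_1/n,\dots,k_m/n)$ with $\sum_i k_i = n$; using only the minimality of $\sigma(\pi_t^1)$ so that the largest-remainder vector $q^\star$ need merely be a feasible witness (and it is feasible, since $\sum_i \lfloor np(i)\rfloor = n-r$ and you add $r$ units); the bookkeeping identity $n\,|\pi_t^1 - q^\star|_{TV} = \sum_{i\in S}(1-f_i) + \sum_{i\notin S} f_i = 2\bigl(r - \sum_{i\in S} f_i\bigr)$; the top-$r$-average bound $\sum_{i\in S} f_i \ge r^2/m$; and the integer maximization $r(m-r) \le a(1+a)$ for $r \in \{0,\dots,m-1\}$, with the trivial cases $r=0$ and $m=1$ handled. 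You also correctly work with the paper's unnormalized ($\ell_1$-type) definition of $|\cdot|_{TV}$. What your route buys is independence from the external reference plus the sharper observation that the exact integer maximum is $\lfloor m^2/4\rfloor$, so the stated bound $2a(1+a)/(mn)$ is slightly loose precisely when $m$ is even; what the paper's route buys is brevity and the fact that the same reference simultaneously provides the algorithm for computing $\sigma(\pi_t^1)$, which, as you rightly point out, plays no role in the inequality itself.
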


\begin{proof}
The proof follows from \cite[Proposition 2]{reznik2011algorithm}.
\end{proof}

%\begin{remark}
%In this paper, we will consider that total variation distance is $d_{TV}({\pi}_t^1 ,\hat{\pi}_t^1) = \sum_{x_t \in \mathcal{X}}|{\pi}_t^1(x_t) - \hat{\pi}_t^1(x_t)|$. This is two times the standard definition, but simplifies the notation for our analysis.
%\end{remark}

Given any upper bound $\epsilon \in \mathbb{R}_{>0}$, we can use Lemma \ref{lem_bound} to construct a set $\mathcal{Q}_n$ which satisfies $|{\pi}_t^1 - \sigma(\pi_t^1)| \leq \epsilon$ for all $\pi_t^1 \in \mathcal{P}^1$, by selecting $n \geq \frac{2 a(1+a)}{ m\cdot \epsilon}$. Furthermore, the resulting approximate information state $\hat{\Pi}_t^1$ can be updated in a Markovian and strategy independent manner as
$\hat{\Pi}_{t+1}^1 = \sigma[\tilde{f}_t^1(\hat{\Pi}_t^1, U_t^1, \Gamma_t^2, Z_{t+1}^1)]$, for all $t=0,\dots,T-1$.

Our aim is to solve the centralized Problem \ref{problem_2} for agent $1$ using the approximate information state $\hat{\Pi}_t^1$, which takes only finitely many values for all $t = 0,\dots,T$, instead of the information state $\Pi_t^1$, which can take countably infinitely many values. For a fixed prescription strategy $\boldsymbol{\psi}^2$, recall from Lemma \ref{lem_pi_1_cost} that the expected cost at time $t$ can be written as $\tilde{c}_t^1(\Pi_t^1,A_t^2,U_t^1)$. Then, in Problem \ref{problem_2}, we can optimize the performance criterion $\mathcal{J}^1(\boldsymbol{g}^{1})$ using a centralized DP as follows. Let $u_t^1$, $a_t^2$, and $\pi_t^1$ be the realizations of $U_t^1$, $A_t^2$ and $\Pi_t^1$, respectively. Then, we define the value functions
\begin{multline}
    J_t^1(\pi_t^1, a_t^2) := \inf_{u_t^1 \in \mathcal{U}_t^1} \tilde{c}_t^1(\pi_t^1, a_t^2, u_t^1) \\
    + \mathbb{E}[J_{t+1}^1(\Pi_{t+1}^1, A_{t+1}^2)~|~\pi_t^1, a_t^2, u_t^1], \quad t = 0,\dots,T,  \label{pbp_DP_exact}
\end{multline}
and ${J}_{T+1}^1(\pi_{T+1},a_{T+1}) := 0$ identically. The person-by-person optimal control law at time $t$ is $u_t^{*1} = g_t^{*1}(\pi_t^1, a_t^2)$, i.e., the $\arg\inf$ in the RHS of \eqref{pbp_DP_exact}, and the performance of the system is $\mathcal{J}^1(\boldsymbol{g}^{*1}) = \mathbb{E}[J_0^1(\Pi_0^1, A_0^2)]$.

However, we seek the best control strategy $\boldsymbol{\hat{g}}^{*1}$ for Problem \ref{problem_2} which takes the structural form $u_t^1 = \hat{g}_t^1(\hat{\pi}_t^1, a_t^2)$ for all $t=0,\dots,T$.
Thus, we define the modified value functions
\begin{multline} 
    \hat{J}_t^1(\hat{\pi}_t^1, a_t^2) := \inf_{u_t^{1} \in \mathcal{U}^1} \tilde{c}_t^1(\hat{\pi}_t^1, a_t^2, u_t^{1}) \\
    + \mathbb{E}[\hat{J}_{t+1}^1(\hat{\Pi}_{t+1}^1, A_{t+1}^2)~|~\hat{\pi}_t^1, a_t^2, u_t^1], \quad t = 0,\dots,T, \label{pbp_DP_approximate}
\end{multline}
and $\hat{J}_{t+1}^1(\hat{\pi}_t^1, a_t^2) := 0$ identically, where $\hat{\pi}_t^1 = \sigma(\pi_t^1)$. For a fixed $\boldsymbol{\psi}^2$, the best control law using the approximate information state at each $t$ is $u_t^{*1} = \hat{g}_t^{*1}(\hat{\pi}_t^1, a_t^2)$, i.e., the $\arg\inf$ in the RHS of \eqref{pbp_DP_approximate}, and the performance of the system is $\mathcal{J}^1(\boldsymbol{\hat{g}}^{*1}) = \mathbb{E}[\hat{J}_0^1(\hat{\Pi}_0^1, A_0^2)]$. The \textit{loss in person-by-person performance} which arises from using the approximate information state is measured by the difference $|\mathcal{J}^1(\boldsymbol{g}^{*1}) - \mathcal{J}^1(\boldsymbol{\hat{g}}^{*1})|$. Next, we present a result for this performance loss.

\begin{lemma} \label{lem_asymptotic}
For any given prescription strategy $\boldsymbol{\psi}^2$,
\begin{gather}
    \lim_{n \to \infty} |\mathcal{J}^1(\boldsymbol{g}^{*1}) - \mathcal{J}^1(\boldsymbol{\hat{g}}^{*1})| = 0.
\end{gather}
\end{lemma}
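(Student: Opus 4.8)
The plan is to reduce the statement to a continuity-and-approximation argument over the value functions, exploiting the fact that both $J_t^1$ and $\hat J_t^1$ are built by the same Bellman recursion, differing only in that one uses $\Pi_t^1$ while the other uses its quantized version $\hat\Pi_t^1 = \sigma(\Pi_t^1)$. First I would establish that the exact value function $J_t^1(\cdot, a_t^2)$ is Lipschitz continuous (or at least uniformly continuous) in $\pi_t^1$ with respect to the total variation norm, with a Lipschitz constant that can be bounded in terms of $\|c\|_\infty$ and the horizon $T$. This follows by backward induction on $t$: the terminal value is identically zero, hence trivially Lipschitz; at stage $t$, the stage cost $\tilde c_t^1(\pi_t^1, a_t^2, u_t^1)$ is linear (hence $\|c\|_\infty$-Lipschitz) in $\pi_t^1$ because it is the integral of a bounded cost against the distribution, and the continuation term $\mathbb{E}[J_{t+1}^1(\Pi_{t+1}^1, A_{t+1}^2) \mid \pi_t^1, a_t^2, u_t^1]$ is Lipschitz in $\pi_t^1$ since, by Lemma \ref{pi_1_evol}, $\Pi_{t+1}^1$ and $Z_{t+1}^1$ are deterministic functions of $(\pi_t^1, u_t^1, \Gamma_t^2, Z_{t+1}^1)$ with transition kernel linear in $\pi_t^1$; the infimum over the finite set $\mathcal{U}_t^1$ preserves the Lipschitz modulus. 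This yields a bound $|J_t^1(\pi, a) - J_t^1(\pi', a)| \leq K_t \, |\pi - \pi'|_{TV}$ with $K_t$ independent of $n$.

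Next I would quantify how much the \emph{approximate} recursion \eqref{pbp_DP_approximate} deviates from the exact one \eqref{pbp_DP_exact}. The key observation is that at each stage, replacing $\pi_t^1$ by $\hat\pi_t^1 = \sigma(\pi_t^1)$ introduces an error controlled by Lemma \ref{lem_bound}: $|\pi_t^1 - \sigma(\pi_t^1)|_{TV} \leq \frac{2a(1+a)}{mn} =: \delta_n$, with $\delta_n \to 0$ as $n \to \infty$. Propagating this through the Bellman recursion — using the Lipschitz bound from the first step to control the stage cost discrepancy and the continuation-value discrepancy at each of the (at most $T+1$) stages — gives an estimate of the form $\sup_{\pi_t^1, a_t^2} |J_t^1(\pi_t^1, a_t^2) - \hat J_t^1(\sigma(\pi_t^1), a_t^2)| \leq C(T, \|c\|_\infty) \cdot \delta_n$ for some constant $C$ independent of $n$. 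Taking expectations over $\Pi_0^1, A_0^2$ and recalling $\mathcal{J}^1(\boldsymbol{g}^{*1}) = \mathbb{E}[J_0^1(\Pi_0^1, A_0^2)]$ and $\mathcal{J}^1(\boldsymbol{\hat g}^{*1}) = \mathbb{E}[\hat J_0^1(\hat\Pi_0^1, A_0^2)]$, we obtain $|\mathcal{J}^1(\boldsymbol{g}^{*1}) - \mathcal{J}^1(\boldsymbol{\hat g}^{*1})| \leq C(T,\|c\|_\infty)\,\delta_n$, and letting $n \to \infty$ drives $\delta_n \to 0$, giving the claim.

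The main obstacle I anticipate is handling the continuation term carefully, because the approximate recursion applies $\sigma$ \emph{again} at the next stage to $\tilde f_t^1(\hat\pi_t^1, u_t^1, \Gamma_t^2, Z_{t+1}^1)$, so the errors do not merely add once but compound across the horizon; one must verify that the quantization error does not amplify under the transition map $\tilde f_t^1$ — i.e., that $\tilde f_t^1$ is non-expansive or at worst Lipschitz in its first argument in total variation norm, which it is because it is the Bayesian update and conditioning/marginalization are contractions in TV. A secondary subtlety is that $\mathcal{J}^1(\boldsymbol{\hat g}^{*1})$ is defined as the \emph{true} performance of the strategy $\boldsymbol{\hat g}^{*1}$ (a function of $\hat\pi_t^1$) and not merely the value of the approximate DP; one must confirm these coincide, which holds because $\hat\Pi_t^1$ evolves in a Markovian strategy-independent fashion (as noted just before the lemma), so the approximate DP correctly computes the performance of the induced strategy. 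Finally, for full rigor one should note that the constant $C(T, \|c\|_\infty)$ may depend on $T$ but not on $n$, so the limit is taken with $T$ fixed, consistent with the finite-horizon setting.
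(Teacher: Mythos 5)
Your route is genuinely different from the paper's: the paper disposes of this lemma in one line by citing Theorem 3 of \cite{saldi2019asymptotic} (asymptotic optimality of quantized belief approximations), whereas you propose a self-contained finite-horizon error-propagation argument that yields a quantitative bound $C(T,\|c\|_\infty)\,\delta_n$ and deduces the limit. That strategy is legitimate and is in fact closer in spirit to the explicit $\alpha_t$-recursion the paper invokes \emph{after} the lemma via \cite{subramanian2022approximate}; if carried out correctly it would buy an explicit rate rather than a bare asymptotic statement. Your first step is also fine once phrased via the standard representation: $J_t^1(\cdot,a_t^2)$ is an infimum of functionals linear in $\pi_t^1$ (concave, piecewise linear), hence Lipschitz in total variation with a constant of order $(T-t+1)\|c\|_\infty$, which is the cleanest way to get the $n$-independent modulus you need.

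The genuine gap is in the step you yourself flag as the main obstacle. You justify the compounding-error control by asserting that $\tilde f_t^1$ is non-expansive (or uniformly Lipschitz) in its first argument in total variation ``because it is the Bayesian update and conditioning/marginalization are contractions in TV.'' Marginalization is a contraction, but conditioning is not: for a fixed realization $z_{t+1}^1$, the Bayes map $\pi \mapsto \tilde f_t^1(\pi,u,\gamma,z)$ divides by the normalizing constant $\mathbb{P}(z_{t+1}^1\,|\,\pi,u,\gamma)$, which can be arbitrarily small, so the pointwise filter map admits no uniform TV Lipschitz constant and your per-stage propagation, as justified, fails. The standard repair is to never compare posteriors pointwise in $z$: bound instead the \emph{averaged} discrepancy $\sum_{z} \mathbb{P}(z\,|\,\pi)\,\bigl|J_{t+1}^1(\tilde f_t^1(\pi,u,\gamma,z),a') - J_{t+1}^1(\tilde f_t^1(\pi',u,\gamma,z),a')\bigr|$, using the fact that the \emph{unnormalized} update $\pi \mapsto \mathbb{P}(z\,|\,\pi)\,\tilde f_t^1(\pi,u,\gamma,z)$ is linear in $\pi$ together with boundedness and TV-Lipschitzness of $J_{t+1}^1$ (equivalently, push the whole Bellman backup into the concave piecewise-linear representation so that it is again an infimum of linear functionals of $\pi$). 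With that substitution your recursion closes and the bound $C(T,\|c\|_\infty)\delta_n$ holds. A secondary caveat: your claim that $\mathcal{J}^1(\boldsymbol{\hat g}^{*1})$ coincides with $\mathbb{E}[\hat J_0^1(\hat\Pi_0^1,A_0^2)]$ leans on the paper's assertion that $\hat\Pi_t^1=\sigma(\Pi_t^1)$ evolves as $\hat\Pi_{t+1}^1=\sigma[\tilde f_t^1(\hat\Pi_t^1,U_t^1,\Gamma_t^2,Z_{t+1}^1)]$; these two descriptions do not agree exactly (quantize-then-update differs from update-then-quantize), so in a fully rigorous version this mismatch must either be absorbed into the same per-stage error term or handled as in \cite{saldi2019asymptotic,subramanian2022approximate}.
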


\begin{proof}
The proof follows directly from \cite[Theorem 3]{saldi2019asymptotic}.
\end{proof}

Lemma \ref{lem_asymptotic} establishes the asymptotic convergence of the optimal performance by using the approximate information state towards the exact person-by-person optimal performance. Furthermore, it implies that for any desired upper bound on loss $\alpha_0 \in \mathbb{R}_{\geq0}$, there exists a number $n \in \mathbb{N}$ and set $\mathcal{Q}_n$, such that $|\mathcal{J}^1(\boldsymbol{g}^{*1}) - \mathcal{J}^1(\boldsymbol{\hat{g}}^{*1})| < \alpha_0$. An explicit relationship between the upper bound $\alpha_0$ and the upper bound on total variation distance, $\epsilon$ can be obtained using Theorem 9 and Proposition 46 of \cite{subramanian2022approximate}. This is given by recursively defining
\begin{gather}
    \alpha_t = 2(\epsilon \cdot ||c||_{\infty} + 3 \epsilon \cdot ||\hat{J}^1_{t+1}||_{\infty} + 3 \epsilon \cdot \hat{J}_{L}^1 + \alpha_{t+1}),
\end{gather}
where $||\hat{J}^1_{t+1}||_{\infty} := \sup_{\hat{\pi}_t^1, a_t^2}$ $\hat{J}_{t+1}^1(\hat{\pi}_{t+1}^1, a_{t+1}^2)$ and $\hat{J}_L^1$ is a finite upper bound on the Lipschitz constant of $\hat{J}_t^1$ for all $t = 0,\dots,T$. Note that an upper bound on the value of $\hat{J}_{t}$ exists for all $t=0,\dots,T$ because cost is upper bounded. Furthermore, the Lipschitz continuity of $\hat{J}_t^1$ arises naturally from the fact that it is piece-wise linear and concave with respect to $\hat{\pi}_t^1$ for all $t=0,\dots,T$ \cite{smallwood1973optimal}. 

%Let $\hat{J}_L^1$ be a finite Lipschitz constant for all time steps.
%Let $||\hat{J}_{t+1}||_{\infty} := \max_{\hat{\pi}_t^1, a_t^2}$ $\hat{J}_{t+1}^1(\hat{\pi}_{t+1}^1, a_{t+1}^2)$.
%Then, we can derive an upper bound on the difference in the best person-by-person performance that arises from using $\boldsymbol{\hat{g}}^{*1}$ instead of $\boldsymbol{g}^{*1}$.

%\begin{lemma}
%Let $\boldsymbol{\psi}^2$ be a given prescription strategy for agent $2$. Let $\boldsymbol{g}^{*1}$ and $\boldsymbol{\hat{g}}^{*1}$ be the optimal control strategies derived using \eqref{pbp_DP_exact} and \eqref{pbp_DP_approximate}, respectively. Then,
%\begin{gather}
%    \big|\mathcal{J}^1(\boldsymbol{g}^{*1}) - \mathcal{J}^1(\boldsymbol{\hat{g}}^{*1})\big| \leq 2 \alpha_0,
%\end{gather}
%where $\alpha_t \in \mathbb{R}_{\geq0}$ at any time $t$ is iteratively given by
%\begin{gather}
%    \alpha_t = \epsilon \cdot ||c_t||_{\infty} + 3 \epsilon \cdot \rho(\hat{J}_{t+1}^1) + \alpha_{t+1},
%\end{gather}
%where $\rho(\hat{J}_{t+1}^1) = ||\hat{J}^1_{t+1}||_{\infty} + \hat{J}_{L}^1$.
%\end{lemma}

%\begin{proof}
%The proof follows from a direct application of \cite[Theorem 9 and Proposition 46]{subramanian2022approximate}.
%\end{proof}

The maximum loss in person-by-person performance from using an approximate information state in $\mathcal{Q}_n$ is $||\alpha_0||_{\infty} := \sup_{\boldsymbol{\psi}^2} \alpha_0$. Furthermore, we define an approximate information state for agent $2$ as $\hat{\Pi}_t^2 := \mathbb{P}^{\boldsymbol{\psi}}(X_t^1, L_t^2, \hat{\Pi}_t^1~|~M_t^2, \Gamma_{0:t-1}^{1:2})$. In a manner similar to Lemma \ref{pi_2_evol}, we can show that at each $t=0,\dots,T-1$, there exists a function $\hat{f}_t^2$ such that $\hat{\Pi}_{t+1}^2 = \hat{f}_t^2(\hat{\Pi}_t^2, \Gamma_t^{1:2}, Z_{t+1}^2)$. Thus, using the same sequence of arguments as Theorem \ref{pbp_struct_result_2}, we conclude that if we restrict our attention to control strategies with the structural form $U_t^1 = \hat{g}_t^{1}(\hat{\Pi}_t^1, \hat{\Pi}_t^2)$, and $U_t^2 = \hat{g}_t^{2}(L_t^2, \hat{\Pi}_t^2)$ for all $t =0,\dots,T$, the maximum loss in optimal performance in Problem \ref{problem_1}, $|\mathcal{J}(\boldsymbol{g}^*) - \mathcal{J}(\boldsymbol{\hat{g}}^{*1}, \boldsymbol{\hat{g}}^{*1})|$, is also $||\alpha_0||_{\infty}$.

\begin{remark}
In this approximation technique, the set of feasible values of $\hat{\Pi}_t^1$, $\mathcal{Q}_n$, is finite and does not grow in size with time. Thus, $\hat{\Pi}_t^2$ is a simple probability distribution with a finite support, which, in turn, simplifies the implementation of our DP. However, it is still challenging to compute globally optimal prescription strategies for moderate and large values of the parameter $n \in \mathbb{N}$ because the number of possible prescriptions of agent $1$, $|\mathcal{U}_t^1|^{|\mathcal{Q}_n|}$, grows exponentially with $n$. Instead, this approach may be utilized when only person-by-person optimal strategies are required. 
\end{remark}

\section{Conclusions}

In this paper, we introduced a general model for decentralized control of two agents with nested accessible information. We derived structural forms for optimal control strategies with domains which do not grow in size with time and thus, can be derived using a DP decomposition. We also presented simplified optimal control strategies for systems with decoupled state and observation dynamics. Finally, we presented an approximate information state which can be used to derive approximately optimal control strategies with smaller domains. One potential direction for future research includes deriving more efficient approximate representations of the information state and prescriptions. Another important direction of future research is the development of approximate algorithms specialized to efficiently solve the DPs which arise in decentralized control.

\bibliographystyle{ieeetr}
\bibliography{References}

\end{document}